\title[One Dimensional DLA --- Not for circulation]
      {One-dimensional long-range diffusion-limited aggregation III - \\ The limit aggregate}
\author{Gideon Amir}
\date{October 31, 2009}
\newcommand{\thmref}[1]{Theorem~\ref{#1}}
\newcommand{\lemref}[1]{Lemma~\ref{#1}}
\newcommand{\propref}[1]{Proposition~\ref{#1}}
\newtheorem{thm}{Theorem}
\newtheorem*{thm*}{Theorem}
\newtheorem{lemma}[thm]{Lemma}
\newtheorem{coro}[thm]{Corollary}
\newtheorem{prop}[thm]{Proposition}
\newtheorem{claim}[thm]{Claim}
\theoremstyle{definition}
\newtheorem{defn}[thm]{Definition}
\newtheorem*{defn*}{Definition}
\theoremstyle{remark}
\newtheorem*{rem*}{Remark}
\newtheorem*{question*}{Question}
\newtheorem{question}{Question}
\newcommand{\ep}{\varepsilon}
\newcommand{\eps}{\varepsilon}
\renewcommand{\P}{\mathbb{P}}
\newcommand{\E}{\mathbb{E}}
\newcommand{\R}{\mathbb{R}}
\newcommand{\Z}{\mathbb{Z}}
\newcommand{\Cc}{\Upsilon}
\renewcommand{\t}{\mathfrak{T}}
\newcommand{\F}{\mathcal{F}}
\DeclareMathOperator{\diam}{diam}
\newcommand{\dist}{d}
\renewcommand{\H}{\mathcal{H}}
\begin{document}
\maketitle

\begin{abstract}

In this paper we study the structure of the limit aggregate
$A_\infty = \bigcup_{n\geq 0} A_n$ of the one-dimensional long range
diffusion limited aggregation process defined in \cite{AABK}. We
show (under some regularity conditions) that for walks with finite
third moment $A_\infty$ has renewal structure and positive density,
while for walks with finite variance the renewal structure no longer
exists and $A_\infty$ has $0$ density. We define a tree structure on
the aggregates and show some results on the degrees and number of
ends of these random trees. We introduce a new "harmonic competition" model where different colours compete for harmonic measure, and show how
the tree structure is related to coexistence in this model.
\end{abstract}

\section{Introduction}

In \cite{AABK} a new $1$-dimensional model of diffusion limited
aggregation(DLA), that tries to capture the fractal nature of the
celebrated DLA model of T.\ Witten and L.\ Sander \cite{WS81},  is
defined and studied. The model, defined rigorously in section
\ref{s:DLA_paths} can be described as follows: Start with an
aggregate containing a single particle at $0$, at each stage, let a
new particle perform a random walk with long jumps starting "from
infinity" until it attempts to jump onto the existing aggregate, at
which stage the jump is not performed and the particle is glued
(added to the aggregate) in its current position. Thus the process
generates a sequence  $\{0\}= A_0 \subset A_1 \subset \ldots$ of
disconnected sets in $\Z$, dubbed the aggregates, with the $n$'th
aggregate $A_n$ having $n+1$ points. In \cite{AABK} and \cite{AAK},
we study the relation between the diameter of the aggregates,
$D_n=\diam(A_n)$ and the step-distribution of the underlying random
walk $R$. More precisely, denoting by $\alpha(R):=\sup\{a\ge 0 : \E
|R_1-R_0|^{a} < \infty\}$ - the highest moment of the walk, it is
shown that under some regularity conditions the diameters exhibit
several phase transitions as the highest moment of the walk varies.
A minimal version of these results is given in the following
theorem:

\begin{thm}[\cite{AABK} Theorem $1$]\label{T:all}
  Let $R$ be a symmetric random walk with step distribution satisfying
  $\P(|R_1-R_0|=k) = (c+o(1))k^{-1-\alpha}$. Let $D_n$ be the diameter of
  the $n$ particle aggregate. Then
  \begin{itemize}
  \item If $\alpha>3$, then $n-1 \le D_n \le Cn+o(n)$ a.s., where $C$
    is a constant depending only on the random walk.
  \item If $2<\alpha\le 3$, then $D_n=n^{\beta+o(1)}$ a.s., where $\beta =
    \frac2{\alpha-1}$.
  \item If $1<\alpha<2$ then $D_n=n^{2+o(1)}$ a.s.
  \item If $\frac{1}{3}<\alpha<1$ then
    \[
    n^{\beta+o(1)} \le D_n \le n^{\beta'+o(1)}
    \]
    a.s., where $\beta=\max(2,\alpha^{-1})$ and $\beta' =
    \frac2{\alpha(2-\alpha)}.$
  \item If $0<\alpha<\frac{1}{3}$ then $D_n=n^{\beta+o(1)}$ a.s.,
    where $\beta=1/\alpha$.
  \end{itemize}
\end{thm}


\begin{figure}
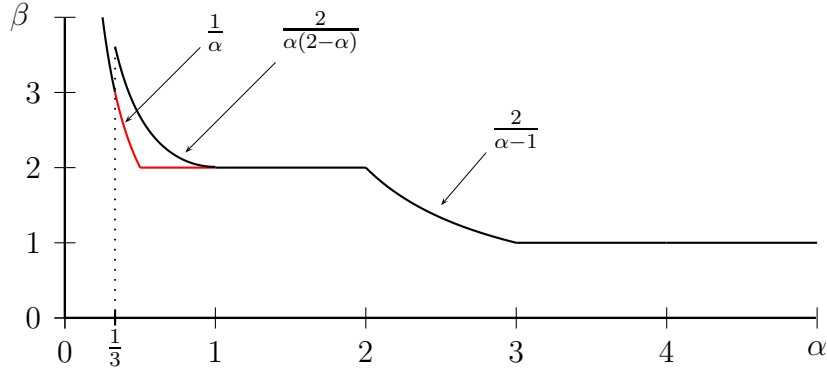

  \begin{center}
    \psset{xunit=20mm,yunit=10mm}
    \pspicture(-1,-.3)(6,4)
    \psaxes(0,0)(4.99,3.99)
    \rput(-.3,4){$\beta$} \rput(5,-.4){$\alpha$}
    \psline(4,1)(5,1)
    \psline(3,1)(4,1)
    \psplot{2}{3}{2 x 1 sub div}
    \psline[linecolor=red](.5,2)(1,2)
    \psline(1,2)(2,2)
    \psplot{.25}{.3333}{1 x div}
    \psplot[linecolor=red]{.3333}{.5}{1 x div}
    \psplot{.3333}{1}{2 x div 2 x sub div .01 add}
    \psline[linestyle=dotted](.3333, 3.6)(.3333,.1)
    \psline(.3333,-.1)(.3333,.1)
    \rput(.3333,-.4){$\tfrac{1}{3}$}
    \psset{linewidth=0.01}
    \rput(1.7,3.8){$\frac2{\alpha(2-\alpha)}$} \psline{->}(1.4,3.4)(.8,2.2)
    \rput(1,3.8){$\frac1{\alpha}$} \psline{->}(0.9,3.6)(.4,2.6)
    \rput(3,2.5){$\frac2{\alpha-1}$} \psline{->}(2.8,2.2)(2.5,1.5)
    \endpspicture
  \end{center}
  \caption{From \cite{AABK}: If the random walk $R$ has $\alpha$ finite moments, then the
    diameter of the resulting $n$-particle aggregate grows as $n^\beta$.
    For $\frac{1}{3}<\alpha<1$ the lower and upper bounds for $\beta$
    differ, and the lower bound is conjectured to be correct.}
  \label{fig:graph}
\end{figure}

For explanations concerning the various phase transitions in the
theorem, as well as a more thorough  introduction to diffusion
limited aggregation processes, the reader is referred to the
introduction of \cite{AABK}.

 In this paper, we study the limit aggregate of the $1$
dimensional diffusion limited aggregation process, defined simply as
$A_\infty=\bigcup_{n\geq 0} A_n$ - the set of all points eventually
added to the aggregates. The natural expectation is that the density
of $A_\infty$ reflects the growth rate of $D_n$, at least to order
of magnitude, that is if $D_n = n^{\beta+o(1)}$ then
\begin{equation}
\big| A_\infty\cap[-n,n] \big| = n^{1/\beta+o(1)}.
\label{eq:Ainfty}
\end{equation}
In this paper, we show that this is indeed the case when $\alpha >
2$, and in doing so provide some further detail into the structure
of $A_\infty$. Our main results are as follows:

\begin{thm}\label{T:density_alpha_geq3}
  Assume $\P(\xi>t) \leq Ct^{-\alpha}$ for any $t$ and some $\alpha>3$.
  There exists some $B>0$ such that a.s.\ $A_\infty$ has density $B$.
  Further, $B$ is the limit density of $A_n$:
  \[
  B = \lim_{m_1,m_2\to\infty} \frac{|A_{\infty}\cap[-m_1,m_2]|}{m_1+m_2}
    = \lim_{n\to\infty} \frac{n}{D_n}.
  \]
\end{thm}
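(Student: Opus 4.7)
My plan is to build a regeneration decomposition of $A_\infty$ on the positive half-line, and symmetrically on the negative half-line, and then obtain both the existence of the density and the identity $B=\lim n/D_n$ from the strong law of large numbers.

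\smallskip\noindent\textbf{Step 1 (regeneration).}
I would exhibit random epochs $0=\tau_0<\tau_1<\tau_2<\cdots$ and sites $r_k=\max A_{\tau_k}$ such that the pairs
\[
(r_{k+1}-r_k,\; N^+_{k+1}-N^+_k), \qquad N^+_k := |A_{\tau_k}\cap(0,r_k]|,
\]
form an i.i.d.\ sequence. The event I would engineer at each candidate epoch is that $r_k$ becomes \emph{sealed from the right}: no walker added at a later time ever lands at a site $\le r_k$ on the positive half-line. Using the walk-from-infinity construction of \cite{AABK1} together with the tail $\P(\xi>t)\le Ct^{-\alpha}$, I would show the sealing probability at every candidate epoch is bounded below by a positive constant, so by a Borel--Cantelli argument sealing events occur infinitely often. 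The key moment computation is that the gaps $r_{k+1}-r_k$ have finite first moment exactly when $\alpha>3$; this is the same threshold that produces the linear regime $D_n\le Cn+o(n)$ in Theorem~1 of \cite{AABK1}.

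\smallskip\noindent\textbf{Step 2 (density).}
Granting the i.i.d.\ structure, the SLLN yields $r_k/k\to\mu:=\E(r_1-r_0)$ and $N^+_k/k\to\nu:=\E N^+_1$, both finite and positive. Since $r_k$ is sealed, $A_\infty\cap(0,r_k]=A_{\tau_k}\cap(0,r_k]$, hence
\[
\frac{|A_\infty\cap(0,r_k]|}{r_k}\longrightarrow\frac{\nu}{\mu}=:B^+>0.
\]
Using monotonicity and the fact that $r_{k+1}-r_k=o(r_k)$ a.s., I extend the limit along the subsequence to arbitrary $m\to\infty$, giving $|A_\infty\cap(0,m]|/m\to B^+$. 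The symmetric construction on the left yields $B^-=B^+$ by symmetry of the walk, and combining the two halves gives the two-sided density $B$.

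\smallskip\noindent\textbf{Step 3 (identification with $\lim n/D_n$).}
At each $\tau_k$ the right half of $A_\infty$ is already frozen, so $N^+_{\tau_k}/R_{\tau_k}\to B^+$ along the subsequence. Interpolating in $n$ via monotonicity of $N^+_n$ and $R_n$ promotes this to $N^+_n/R_n\to B$; symmetrically $N^-_n/(-L_n)\to B$. Adding and using $D_n=R_n-L_n$ and $N^+_n+N^-_n=n+O(1)$ gives $n/D_n\to B$.

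\smallskip\noindent The hardest part will be Step~1: making rigorous the conditional independence that supports the renewal structure. Because jumps are arbitrarily long, no deterministic barrier separates the evolution of the two halves of the aggregate, so the independence must be extracted purely from tail estimates. The delicate point is to show that the probability that \emph{some} future walker ever breaches the current rightmost point is bounded away from $1$ (for the sealing event to have positive probability) while simultaneously having a tail light enough that the renewal increments $r_{k+1}-r_k$ are integrable; summing $\P(\xi>t)$ over future particles and all of their jumps produces exactly the $\alpha>3$ condition.
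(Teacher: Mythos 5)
Your proposal is essentially the paper's own argument: the ``sealing from the right'' events are precisely the paper's strong right renewal times, the uniform lower bound on the conditional sealing probability is Proposition~\ref{P:renewal_alpha_geq3}, and the i.i.d.\ renewal increments combined with the SLLN / renewal reward theorem and subsequence interpolation are the concluding steps of the paper's proof of the theorem. The hard content you correctly defer to Step~1 --- a uniform tail bound on how deep a single walker penetrates the aggregate, where $\alpha>3$ enters --- is exactly Lemma~\ref{L:missAalpha3}, proved there via the ladder-process/overshoot estimates of Lemma~\ref{L:overshoot}, and the domination by a renewal process (rather than a bare Borel--Cantelli argument) is what gives the i.i.d.\ structure you need.
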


\begin{thm}\label{t:den_0}
Assume there exist $2<\alpha<3$ and constants $c_1,c_2>0$ so that $\xi$ satisfies $c_1n^{-\alpha} \leq \P(\xi\geq n) \leq c_2n^{-\alpha}$  for all $n$
then a.s.
\[|A_\infty \cap [-n,n]| = n^{\frac{\alpha-1}{2}+ o(1)}\]
In particular, $A_\infty$ has $0$ density in the sense that $\lim
\frac{|A_\infty \cap [-n,n]|}{n}=0$.
\end{thm}

To show \thmref{T:density_alpha_geq3}, we first derive upper bounds
on the probability of a random walk passing through a set with $n$
points without hitting it. These bounds, which hold uniformly in the
structure of the set, are then used to show that the process has
renewal times - times at which the subsequent growth of the
aggregate is independent of the structure of the aggregate until
that time. We show that the set of renewal times dominates a renewal
process with positive density, and deduce
\thmref{T:density_alpha_geq3} as a consequence.

 When $2<\alpha<3$, the strategy of "jumping over" a set to avoid it becomes possible.
 Using a lower bound on this probability we show the obstacles created by the growing aggregates
 are not enough to stop new particles from occasionally coming
 through, and conclude that the renewal structure no longer
 exists.
However, by combining properties of the random walk with simple
geometric properties of the aggregates that follow from the diameter
growth rates, we are able to show that it is still hard for
particles to penetrate deep into the aggregate, and derive the upper bound of
\thmref{t:den_0}. The lower bounds follows from directly from the $2<\alpha\le 3$ clause of Theorem \ref{T:all}


 The case $\alpha<2$ has rather different
difficulties and at present we are not ready to speculate on the
validity of \eqref{eq:Ainfty}. However, one must make some
precautions as in Chapter $7$ of \cite{AABK} an example is given of
a walk with $\alpha=0$, ``the $\Z^{3}$ restricted walk'' for which,
despite the fact that $D_n$ grows faster then any polynomial,
$A_\infty=\Z$. We do not know if such examples exist for
$0<\alpha<2$, as the construction used is somewhat special.

Last, we introduce some additional tree structure onto the
aggregates, creating increasing families of random trees which we
call the \textbf{aggregation trees}.

To get the aggregation tree $\t_n$ from the aggregate $A_n$ and the
paths of its particles, we draw an edge between the position at
which each particle was stopped when coming from infinity, and the
position onto which it attempted to jump. The limit aggregation tree
$\t_\infty$ is defined simply as the union of the finite stage
trees. Thus $\t_\infty$ combines spatial information (distance)
together with the graph structure. Two basic questions on the tree
structure are the degree distribution of its vertices and the number
of ends in the tree. In section \ref{s:tree} we give a formal
definition of these trees, relate them to a competition model where
colours compete for harmonic measure and give some partial answers
to the above questions.

\subsection*{Acknowledgements}
The Author wishes to thank Omer Angel, Itai Benjamini and Gady Kozma
for introducing him to diffusion limited aggregation and its
$1$-dimensional variant and for helpful discussions, and to thank
Omer Angel for useful comments on an earlier version of the paper.
This research was supported by the Israel Science Foundation (grant No. 1471) and by a Grant
from the GIF, the German-Israeli Foundation for Scientific Research and
Development.

\section{Preliminaries and notation}
 We will denote a single step of our random walk by $\xi$,
and the random walk itself by $R=(R_0,R_1,\dotsc)$. We will assume
through out the paper that our random walk is aperiodic and
symmetric. We denote by $\P_x$ the probability measure of the random
walk started at $x$. We denote by $p_{x,y}$ the probability of the
random walk to move from $x$ to $y$ in one step (so
$\P(\xi=x)=p_{0,x}$). For a given set $A$, define
\[
p(x,A)=\sum_{a\in A}p_{x,a}.
\]
We denote by $T_A$ be the hitting time of $A$, defined as
\[
T_A = \min\{n>0 \text{ s.t. } R_n\in A\}.
\]
Note that $T_A>0$ even if the random walks starts in $A$. For a set
$A=\{x\}$ with a single member we also write $T_x$.

We define the hitting measure by
\[
H_A(x,a)=\begin{cases}
\P_x(R_{T_A}=a) & x\not\in A\\
\delta_{x,a} & x\in A
\end{cases}
\qquad H_A(\pm\infty,a)=\lim_{x\to\pm\infty}H_A(x,a)
\]
by \cite[T30.1]{S76} the limit on the right-hand side exists for any
aperiodic random walk. $H_A(\pm\infty,\cdot)$ is called the harmonic
measure on $A$ from $\pm\infty$. We will set $H_A(a) = \frac12
H_A(+\infty,a) +\frac12 H_A(-\infty,a)$ and call it the
\textbf{harmonic measure} of $a$ with respect to $A$.

For a subset $A\subset \Z$ we will denote by $\diam A$ the diameter
of $A$, namely $\max A - \min A$, by $\overline{A}=[\min A,\max A]$
the minimal interval containing $A$, and by $|A|$ the number of
points in $A$. For $x\in\Z$ we will denote by $\dist(x,A)$ the
point-to-set distance, namely $\min_{y\in A}|x-y|$. For convenience
we denote by $\Z_+$ the positive integers including $0$ and by
$\Z_-$ the strictly negative integers.

By $C$ and $c$ we denote constants which depend only on the walk $R$
but not on the other parameters involved. The constants hidden in
the $o(\cdot)$ notation may also be random. Generally $C$ and $c$
might take different values at different places, even within the
same formula. $C$ will usually pertain to constants which are ``big
enough'' and $c$ to constants which are ``small enough''.

$X\lesssim Y$ denotes that $X<C Y$. By $X\approx Y$ we mean
$cX<Y<CX$ (that is, $X\lesssim Y \lesssim X$).

In this paper we consider only random walks with finite variance.
The following lemma (Lemma $4.2$ of \cite{AABK}) captures some
properties of such walks that will be useful for our analysis.
\begin{lemma} \label{L:hit_x}
  Let $R$ be a random walk on $\Z$ with steps of mean 0 and variation
  $\sigma^2<\infty$. Then there are $c,C>0$ such that for any $A\subset\Z$,
  $A\neq\emptyset$,
  \begin{enumerate}
  \item\label{enu:limy>c} If $x>\max A$, then $\lim_{y\to\infty}
  \P_y(T_x<T_A) > c$.
  \item\label{enu:escp dist} If $\dist(x,A)$ is large enough then $c <
  \dist(x,A) \P_x(T_A<T_x) < C$.
  \end{enumerate}
\end{lemma}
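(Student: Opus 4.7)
My plan is to deduce both parts from standard potential theory for aperiodic symmetric random walks on $\Z$ with finite variance $\sigma^2$. The key tool is Spitzer's potential kernel $a$: it satisfies $a(0) = 0$, $(P - I)a = \delta_0$ (so $a$ is harmonic off the origin), and $a(k) \sim |k|/\sigma^2$ as $|k| \to \infty$. I will also use the singleton identity $\P_u(T_v < T_u) = 1/(2\, a(v - u))$ for $u \neq v$, valid for symmetric recurrent walks.

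For part (1), fix $y > x$ large and let $\tau = \inf\{n \geq 1 : R_n \leq x\}$ be the first entry of the walk into $(-\infty, x]$. Since $A \subseteq (-\infty, x-1]$ by hypothesis and $R_k > x$ for all $k < \tau$, the event $\{R_\tau = x\}$ forces $T_x = \tau < T_A$, so it suffices to show $\liminf_{y \to \infty} \P_y(R_\tau = x) > 0$. This limit is precisely the mass that the harmonic measure on $(-\infty, x]$ from $+\infty$ places at $x$, and it exists by applying the already-cited \cite[T30.1]{S76} to an increasing sequence of finite truncations $B_M = \{x-M, \ldots, x\}$. Its positivity follows from aperiodicity, which prevents the limiting hitting distribution from having a gap at the lattice point $x$.

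For part (2), write $r = \dist(x, A)$ and let $y^* \in A$ with $|y^* - x| = r$. The lower bound is immediate from the inclusion $\{T_{y^*} < T_x\} \subseteq \{T_A < T_x\}$ and the singleton identity, which give $\P_x(T_A < T_x) \geq 1/(2\, a(r)) \sim \sigma^2/(2r)$. For the upper bound, note $A \subseteq (-\infty, x - r] \cup [x + r, \infty)$, so by symmetry of the walk, monotonicity of hitting events in the target set, and a union bound,
\[
\P_x(T_A < T_x) \leq 2\, \P_x\bigl(T_{[x+r, \infty)} < T_x\bigr).
\]
I would then apply optional stopping to the martingale $M_n = a(R_n - x) - \#\{k < n : R_k = x\}$ at $T = T_{[x+r, \infty)} \wedge T_x$. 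Starting at $R_0 = x$ gives $M_0 = 0$, and the walk visits $x$ exactly once (at time $0$) before $T$, so $\E_x[M_T] = 0$ rearranges to
\[
\P_x\bigl(T_{[x+r, \infty)} < T_x\bigr) \cdot \E_x\bigl[a(R_T - x) \mid T_{[x+r, \infty)} < T_x\bigr] = 1.
\]
Since $R_T - x \geq r$ on the conditioning event and $a$ is eventually monotone with $a(k) \sim k/\sigma^2$, the conditional expectation is at least $a(r) \gtrsim r/\sigma^2$ once $r$ is large, yielding $\P_x(T_A < T_x) \leq C/r$.

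The main obstacle is the upper bound in part (2): the walk can overshoot the boundary of $[x+r, \infty)$ by an arbitrary amount, so a direct Wald-type computation would require control on the expected overshoot, which is delicate under only finite variance. The potential-kernel argument above sidesteps this difficulty by relying only on the one-sided inequality $a(R_T - x) \geq a(r)$, which is automatic regardless of how far the walk jumps past $x + r$. Part (1) is technically softer and only requires qualitative positivity of a specific atom in the limit hitting measure, for which aperiodicity is all that is needed beyond the recurrence and kernel existence already guaranteed by finite variance.
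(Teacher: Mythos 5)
The paper itself does not prove this lemma; it is imported verbatim as Lemma~4.2 of \cite{AABK1}, so there is no internal proof to compare against. Assessing your argument on its own merits:

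Your treatment of part~(2) via the potential kernel $a$ is a standard and sound route. The lower bound via $\P_x(T_{y^*}<T_x)=1/(2a(r))$ is correct (and uses symmetry of the walk, which the paper assumes, to write $a(y^*-x)=a(r)$). For the upper bound, the martingale $M_n = a(R_n-x)-\#\{k<n:R_k=x\}$ does what you need, but applying optional stopping directly at $T=T_{[x+r,\infty)}\wedge T_x$ is not immediate because $a(R_n-x)$ is unbounded on $\{n<T\}$ (the walk can travel far to the left). The clean fix is to stop additionally at $T_{(-\infty,x-L]}$, apply optional stopping to the bounded martingale, drop the nonnegative term coming from a left exit, and let $L\to\infty$ using monotone convergence of the events $\{T_{[x+r,\infty)}<T_x\wedge T_{(-\infty,x-L]}\}\uparrow\{T_{[x+r,\infty)}<T_x\}$. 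With that patch your inequality $\P_x(T_{[x+r,\infty)}<T_x)\le 1/\min_{z\ge r}a(z)\lesssim 1/r$ holds.

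Part~(1) has a genuine gap. You correctly reduce the claim to positivity of the harmonic measure from $+\infty$ on the half-line $(-\infty,x]$ at the boundary point $x$, but then assert that positivity ``follows from aperiodicity, which prevents the limiting hitting distribution from having a gap at the lattice point $x$.'' That is not true. The quantity $\lim_{y\to\infty}\P_y(R_\tau=x)$ equals the probability that the descending ladder process started from $y$ ever visits $x$; writing this as a renewal process with step distribution $|L_\xi|$, the Blackwell renewal theorem gives the limit $1/\E|L_\xi|$, and aperiodicity only ensures the limit exists without sublattice oscillation. Positivity requires $\E|L_\xi|<\infty$, which is \emph{not} a consequence of aperiodicity; it is exactly where the finite-variance hypothesis enters (for a symmetric mean-zero walk with $\E\xi^2<\infty$ one has $\E|L_\xi|\approx\sigma$). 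For symmetric aperiodic recurrent walks with infinite variance, e.g.\ in the domain of a stable law with index $\alpha\in(1,2)$, the ladder height has infinite mean and the boundary atom of the harmonic measure from infinity is $0$, so the statement you are invoking genuinely fails without finite variance. The finite ladder-mean step is thus a real ingredient that needs to be stated and justified, not attributed to aperiodicity.
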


\subsection{DLA as a measure on infinite paths}\label{s:DLA_paths}

The purpose of this subsection is to define the DLA generated by a
set of random walkers starting "at infinity", in a way that will
retain information on the paths of the particles that were used to
generate the aggregate. This will allow us to study properties of
these paths and relate them to the structure of the aggregates and
the limit aggregate, and in particular allow us to define a renewal
structure on the aggregation process.

We define the measure $\P_{+\infty}$, depending implicitly on $A$,
as follows. This measure is supported on paths $\{\gamma_i\}_{i\leq
0}$, i.e.\ paths with no beginning but a last step. It is defined as
the limit as $y\to\infty$ of the law of $\{R_{T_A+i}\}_{i\leq 0}$.
Informally, $\P_{+\infty}$ is interpreted as the random walk started
at $+\infty$, and stopped when it hits $A$. Clearly it is supported
on paths in $\Z\setminus A$, except for $R_0\in A$. The measure
$\P_{-\infty}$ is defined similarly using $y\to-\infty$. We define
the measure $\P_\infty = \frac12(\P_{+\infty} + \P_{-\infty})$.

It was proved in Lemma $2.1$ of \cite{AABK} that for recurrent
random walks $\P_{+\infty},\P_{-\infty}$ are probability measures
and that for any $x_0 \in A$ and $x_{-1},...x_{-n} \in \Z \setminus
A$
  \begin{equation}
    \P_{\pm\infty}(R_i=x_i \text{ for } -n\le i\le 0) =
    \frac{\P_{\pm\infty} (T_{x_{-n}} < T_A)} {\P_{x_{-n}}(T_A <
      T_{x_{-n}})} \prod_{i=-n}^{-1} p_{x_i,x_{i+1}}
    \label{eq:Ppmrecurr}
    \end{equation}
Let us spend a moment explaining formula \eqref{eq:Ppmrecurr}, as
this type of analysis will return later on in the paper. For
clarity, write $z=x_{-n}$. Now, in order for the event on the
  right-hand side to happen, the walk must hit $z$ before hitting $A$, which happens
  with probability $\P_{\pm\infty}(T_z < T_A)$. By the strong Markov property at
  $T_z$, with probability $\P_z(T_A < T_z)$ the walk will not hit $A$
  before its next return to $z$. Thus the expected number of visits to $z$
  before $T_A$ is
  \[
  \frac{\P_{\pm\infty}(T_z < T_A)} {\P_z(T_A < T_z)}.
  \]
  At each of these visits there is probability $\prod_{i=-n}^{-1}
  p_{x_i,x_{i+1}}$ of making the prescribed sequence of jumps ending at
  $x_0\in A$. Since the walk is stopped once such a sequence of jumps is
  made, the events of making these jumps after the $i$'th visit to $z$ are
  disjoint (for different $i$'s). Hence summing over these events
  gives \eqref{eq:Ppmrecurr}.

To define the aggregates of the $1$-DLA process, it is enough to use
the projection of $\P_\infty$ into the last two steps $R_0,R_{-1}$
of the random walk, as was done in \cite{AABK}. However,
 some of the events we would like to consider, such as
renewal times, will depend on the set of paths used to build the
aggregates and not only on the aggregates themselves. We therefore
define the $1$-DLA process with paths:

\begin{defn}\label{def:DLA_with_paths}
  Let $R$ be a recurrent random walk on $\Z$. The \textbf{DLA process with paths with respect to
    $R$} is a sequence of random tuples $\{A_n,\Pi_1,\ldots
    \Pi_n\}_{n\geq 0}$ where $A_0=\{0\}$, $\Pi_i$ is chosen according
    to $\P_\infty$ with respect to the set $A_{i-1}$, and $A_i =
    A_{i-1} \cup \{\Pi_i(-1)\}$.
The sets $A_0=\{0\} \subset A_1 \subset
  \cdots$ are called the \textbf{aggregates} of the process while
  $\Pi_i$, called the \textbf{path} of the i'th particle, is a
  backward infinite path on $\Z\setminus
  A_{i-1}$ ending at $\Pi_i(0)\in A_{i-1}$. We set $a_0=0$ and
  $a_i=\Pi_i(-1)$, thus $A_n=\{a_0,a_1,\ldots, a_n\}$. We call $a_i$ the $i$'th particle in the
  aggregate. The limit aggregate of the process, $A_\infty$ is defined as the union of all the finite-time aggregates
  $A_\infty=\bigcup_{n\geq 0}A_n = \bigcup_{n\geq 0}\{a_n\}$.\\
  We define  $\F_n$ to be the minimal $\sigma$-field
generated by $\Pi_1,\ldots \Pi_n$. This includes all information
about the aggregates $A_0,\ldots,A_n$.
\end{defn}

It is immediate from the definition that the projection of this
  process onto the sequence $\{A_n\}_n\geq 0$ gives back the usual
  $1$-DLA process defined in \cite{AABK}.

We say that the $i$'th particle \textbf{started from $+\infty$} if
$\lim_{n\to -\infty} \Pi_i(n) = +\infty$ , and that the $i$'th
particle \textbf{started from $-\infty$} if $\lim_{n\to -\infty}
\Pi_i(n) = -\infty$. \\ Since $\P_{\pm\infty} (\lim_{n\to
-\infty}R_n = \pm \infty) = 1$ with probability $1$ every particle
has a well defined starting position.

 Given the paths constructing the $1$-DLA processes, we can now define renewal times:
\begin{defn}
$n$ is called a \textbf{weak right renewal time} for the $1-DLA$
process if \\ $a_n > \max A_{n-1}$, and from time $n$ and on all
particles that start from $+\infty$ are
added to the aggregate to the right of $a_n$. \\
 $n$ is called a \textbf{strong
right renewal time} if in addition the paths of all particles that
start from $+\infty$ after time $n$ do not hit the half line to the
left of $a_n$, nor are any such particles glued to a point to the
left of $a_n$.(i.e. $a_n > \max A_{n-1}$ and $\Pi_i(j)\geq a_n$ for
all times $i\geq n$ at which the i'th particle starts from infinity
and
and for all $j\leq 0$).\\
A symmetric definition holds for left renewal times and particles
coming from $-\infty$.
\end{defn}

\section{The structure of the limit aggregate }

\subsection{Walks with $\alpha > 3$}

Our starting point for analyzing the limit aggregate for the case
$\alpha>3$ is the following theorem from \cite{AABK} bounding
diameter of the aggregates $\{A_n\}$:
\begin{thm}\label{T:thirdmoment} [\cite{AABK} theorem $4.1$]
  If $\E|\xi|^3<\infty$ and $\E\xi=0$ then there is some $C$ so that $\limsup
  \frac{D_n}{n} < C$ a.s.
\end{thm}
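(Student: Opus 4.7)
The lower bound $D_n \ge n-1$ is automatic, since $A_n$ consists of $n+1$ distinct integers. For the upper bound, write $M_n = \max A_n$ and $m_n = \min A_n$. The diameter increases only when a new particle is glued outside the interval $[m_n, M_n]$, so by symmetry it suffices to bound $\E[(a_{n+1}-M_n)_+ \mid \F_n]$ by a constant depending only on the walk.

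The plan is to estimate, for every $k \ge 1$, the probability that the newly added particle is glued at $M_n + k$. Applying formula \eqref{eq:Ppmrecurr} with $x_{-1} = M_n + k$ and summing over $x_0 \in A_n$ gives
\[
\P_{\pm\infty}(R_{-1} = M_n + k) \;=\; \frac{\P_{\pm\infty}(T_{M_n+k} < T_{A_n})}{\P_{M_n+k}(T_{A_n} < T_{M_n+k})}\, p(M_n+k,\,A_n).
\]
By \lemref{L:hit_x}(2) the denominator is at least $c/k$, and the numerator is at most $1$. The crucial uniform estimate is on the jump probability $p(M_n+k,A_n)$: since $A_n \subset (-\infty, M_n]\cap\Z$, the displacements $\{M_n+k-y : y \in A_n\}$ are \emph{distinct} positive integers, each $\ge k$, so by symmetry of $\xi$
\[
p(M_n+k, A_n) \;=\; \sum_{y\in A_n} \P(\xi = M_n+k-y) \;\le\; \sum_{j \ge k} \P(\xi = j) \;=\; \P(\xi \ge k).
\]
This bound is \emph{independent} of $|A_n|$ and the fine structure of $A_n$; it is the one place where the one-dimensionality (density at most $1$) of the aggregate enters. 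Combining yields $\P_{\pm\infty}(R_{-1} = M_n + k) \le C k\, \P(\xi \ge k)$.

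Summing, and using the standard identity $\sum_{k \ge 1} k^2 \P(\xi \ge k) \asymp \E|\xi|^3$, gives
\[
\E\bigl[(a_{n+1}-M_n)_+ \bigm| \F_n\bigr] \;\le\; C \sum_{k \ge 1} k^2 \P(\xi \ge k) \;\le\; C'\, \E|\xi|^3 \;<\; \infty.
\]
This is precisely the place where the third moment hypothesis is used. A symmetric argument bounds $\E[(m_n - a_{n+1})_+ \mid \F_n]$, so $\E[D_{n+1}-D_n\mid\F_n] \le C$ uniformly, and in particular $\E D_n \le Cn$.

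The main obstacle is then upgrading this $L^1$ growth to the almost sure $\limsup$ statement. The one-step estimate above only yields the conditional tail $\P(a_{n+1}-M_n > j \mid \F_n) \le C/j$, so the conditional second moment of an increment is in general infinite and a direct Chebyshev or $L^2$ martingale argument fails. I would combine a truncation with a dyadic Borel--Cantelli scheme: truncate $Y_n := D_{n+1}-D_n$ at a slowly growing level $j_n$ chosen so that $\sum_n \P(Y_n > j_n \mid \F_{n-1})$ is almost surely finite, apply an $L^2$ concentration estimate to the truncated supermartingale $\sum_{k \le n}(Y_k \wedge j_k) - Cn$, and control the residual rare large jumps by Borel--Cantelli. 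The delicacy --- and the main technical point I expect to be hardest to push through cleanly --- is that the third moment hypothesis is exactly at the borderline for this scheme to close, so the truncation level must be chosen carefully to match the $C/j$ tail produced in the landing-probability estimate.
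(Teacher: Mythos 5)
Your one-step estimate is the right one and is done correctly: $\P(a_{n+1}=M_n+k\mid\F_n)\le \frac{p(M_n+k,A_n)}{\P_{M_n+k}(T_{A_n}<T_{M_n+k})}\le Ck\,\P(\xi\ge k)$, with the pigeonhole bound $p(M_n+k,A_n)\le\P(\xi\ge k)$ being exactly the point where the one-dimensionality enters. This is the hard half of the argument, and you have it.

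The gap is in the last paragraph, where you underestimate what your own bound gives you and then conjure an unnecessary truncation scheme. You assert the conditional tail is only $\P(a_{n+1}-M_n>j\mid\F_n)\le C/j$, but summing your one-step bound over $k>j$ gives the much stronger
\[
\P\bigl((a_{n+1}-M_n)_+>j\mid\F_n\bigr)\;\le\; C\sum_{k>j}k\,\P(\xi\ge k)\;=:\;Ch(j),
\]
uniformly in $\F_n$, and this $h(j)$ is \emph{summable}: interchanging the order of summation,
\[
\sum_{j\ge 0}h(j)=\sum_{j\ge 0}\sum_{k>j}k\,\P(\xi\ge k)=\sum_{k\ge 1}k^2\,\P(\xi\ge k)\;\asymp\;\E|\xi|^3\;<\;\infty.
\]
So the right-increment $(a_{n+1}-M_n)_+$ is stochastically dominated, uniformly in $\F_n$, by a single integrable random variable $Y$ with $\P(Y>j)=\min(1,Ch(j))$. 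A standard coupling then produces i.i.d.\ $Y_n\sim Y$ with $(a_{n+1}-M_n)_+\le Y_n$ a.s., hence $M_n\le M_0+\sum_{i\le n}Y_i$, and the strong law of large numbers gives $\limsup M_n/n\le\E Y<\infty$ a.s. The same bound for $-m_n$ by symmetry finishes the theorem. There is no borderline to walk: the third-moment hypothesis is used precisely to make $h$ summable, not merely $o(1/j)$, and no Chebyshev, no truncation, no dyadic Borel--Cantelli are needed. This matches how the paper itself refers to the argument (the one-step diameter increments are ``stochastically dominated by i.i.d.\ variables with finite expectation,'' after which the a.s.\ linear bound is immediate).
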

As noted in the introduction, this suggests that the limit aggregate
$A_\infty$ might have positive density. Theorem
\ref{T:density_alpha_geq3} state that this is indeed the case. To
prove \thmref{T:density_alpha_geq3} we study the strong renewal
times of the $1-DLA$ process.

\begin{prop}\label{P:renewal_alpha_geq3}
  If $\P(\xi>t)\leq Ct^{-\alpha}$ for some $\alpha>3$ and all $t>0$, then
  there exists a constant $c>0$ depending only on $\xi$, such that for any
  $n$
  \[
  P\left(n \text{ is a strong  renewal time} \ | \ \F_{n-1} \right) > c.
  \]
  Further, the set of renewal times dominates a renewal process with
  positive density $c$.
\end{prop}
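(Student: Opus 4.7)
The plan is to prove the conditional lower bound $\P(n\text{ is a strong right renewal time}\mid \F_{n-1})\geq c$; the left case is symmetric, and the domination by a renewal process of density $c$ then follows from the standard coupling of the renewal-indicator sequence with an i.i.d.\ Bernoulli$(c)$ process, which is legitimate precisely because the pointwise bound is conditional on $\F_{n-1}$ for every $n$. Fix $n$, write $m=\max A_{n-1}$, and decompose the event into (E1) \emph{the $n$-th particle arrives from $+\infty$ and is glued at $a_n=m+1$}, and (E2) \emph{every later particle arriving from $+\infty$ has its entire path contained in $[a_n,\infty)$}. Event (E1) is direct: the arrival direction is $+\infty$ with probability $\tfrac12$; by \lemref{L:hit_x}(1) the walker then reaches $m+1$ before $A_{n-1}$ with probability at least some absolute $c>0$; and once at $m+1$ it attempts a jump into $A_{n-1}$ (for instance to $m$ itself) with probability at least $p_{0,-1}>0$, at which instant it is glued at $m+1$. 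Hence $\P(\mathrm{E1}\mid \F_{n-1})\geq c_1$ uniformly.

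The main technical input, as advertised in the introduction, is a \emph{uniform passing-through estimate}: there is a nonincreasing function $\psi\colon\N\to[0,1]$ depending only on $\xi$, with $\sum_k\psi(k)<\infty$ whenever $\alpha>3$, such that for every $B\subset[a,\infty)$ with $|B|\geq k$, the probability that a $+\infty$-walker aimed at $B$ visits some site $<a$ before being glued is at most $\psi(k)$. I would derive this by decomposing the bad event according to the last site $y\geq a$ visited by the walk before it drops below $a$, controlling the expected number of visits to $y$ by last-exit / Green's-function estimates that are uniform in the configuration of $B$, and multiplying by the one-step tail $\P(\xi\leq -(y-a+1))\lesssim (y-a)^{-\alpha}$. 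Using that $B$ supplies at least $k$ potential catching sites then forces the decay in $k$; the condition $\alpha>3$ is precisely what makes the resulting series summable, consistent with the breakdown of renewal structure established in \thmref{t:den_0}.

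Given the estimate, (E2) follows by sequential conditioning on the successive $+\infty$-particles after time $n$. Conditioned on (E1) and on the success of the first $i-1$ such particles, the right portion of the current aggregate contains $\geq i$ points ($a_n$ itself together with one point per successful $+\infty$-particle), so the probability that the $i$-th $+\infty$-particle's path drops below $a_n$ is at most $\psi(i)$. The interleaved $-\infty$-particles are harmless: each particle chooses its direction by an independent coin flip, $-\infty$-particles can only add points to the aggregate (which can only help), and the passing-through estimate is uniform over the target set. Therefore
\[
\P(\mathrm{E2}\mid \mathrm{E1},\F_{n-1})\;\geq\;\prod_{i\geq 1}(1-\psi(i))\;>\;0,
\]
where positivity of the infinite product is equivalent to $\sum_i\psi(i)<\infty$, i.e.\ to $\alpha>3$. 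Combining with the (E1) bound yields the desired uniform $c>0$. The main obstacle is establishing the passing-through estimate with no regularity assumption on the target configuration; the finite-third-moment hypothesis is precisely what is needed for summability, and it is also the step in the proof most specific to $\alpha>3$, since its failure at $\alpha\leq 3$ is what is exploited in \thmref{t:den_0}.
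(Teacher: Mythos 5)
Your strategy matches the paper's in outline: reduce to a uniform ``passing-through'' estimate for a $+\infty$-walker aimed at a set of $\ge k$ points (this is the paper's Lemma~\ref{L:missAalpha3}), then bootstrap to a positive density of strong renewal times, with $\alpha>3$ entering precisely as the summability threshold for that estimate. The (E1)/(E2) split and the use of \lemref{L:hit_x}(\ref{enu:limy>c}) to secure (E1) with uniform positive probability are fine.

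The gap is in your sketched derivation of the passing-through estimate. Decomposing by the last site $y\ge a$ visited before dropping below $a$, bounding the number of visits to $y$ by a Green's-function estimate, and multiplying by the one-step tail $\P(\xi<-(y-a))\lesssim(y-a)^{-\alpha}$ only controls the scenario in which a \emph{single} long jump carries the walk over all of $B$. It does not explain why the walker avoided being glued to one of the $k$ points of $B$ on its way down to $y$, and the Green's function at a fixed $y$ for the walk killed at $B$ is not uniformly small in $k$: the points of $B$ can be spaced so that each is cleared by a jump of individually modest length. The paper's proof of \lemref{L:missAalpha3} deals with this by passing to the ladder (record-minimum) process and splitting into two cases: either the ladder walk makes at least $m$ successive record steps each skipping over some of $B$ without landing in it (probability $\leq e^{-cm}$ via \lemref{L:overshoot}(\ref{l:hit_first_point})), or some single ladder step overshoots $\geq k/m$ points of $B$ (probability $\lesssim m\cdot (k/m)^{2-\alpha}$ via \lemref{L:overshoot}(\ref{l:overshoot})); taking $m\approx\log k$ yields the uniform tail $k^{2-\alpha}\log^{\alpha-1}k$, summable iff $\alpha>3$. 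Some such dichotomy between ``many moderate avoidances'' and ``one large overshoot'' is what makes the bound uniform in the geometry of $B$, and your sketch does not contain it.

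A secondary point: the event ``$n$ is a strong renewal time'' is not $\F_n$-measurable (it depends on all later particle paths, as the paper itself warns), so the ``standard coupling of the renewal-indicator sequence with an i.i.d.\ Bernoulli$(c)$ process'' does not apply off the shelf. The paper instead couples the penetration depths $J_{n_i}$ with i.i.d.\ dominating variables $Y_i$, defines the set $\mathcal{C}=\{n:\forall m\ge n,\ Y_m\le m-n\}$, and checks that $\mathcal{C}$ is a translation-invariant renewal process, measurable with respect to the $Y_i$ alone, with $\mathcal{C}\cap\Z^+$ contained in the set of strong right renewal times. Your sequential-conditioning bound on $\P(n\text{ is a strong renewal time}\mid\F_{n-1})$ is fine as the first half of the proposition, but passing from there to domination by a renewal process needs some explicit construction of this kind.
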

 The lower bound given in the proposition is uniform in $\F_{n-1}$
 meaning that it holds for almost all possible paths of the first $n-1$ particles, however
 one should note that the sigma algebra $\F_{n-1}$ does not contain the
information what are the renewal times before particle $n$, as to
know this one must have information on future paths as well.

 To prove the proposition, we will first prove
a general lemma on 1-dimensional random walks with finite variance.
We will need the following definition:

\begin{defn}
  Let $R$ be a random walk on $Z$ with step distribution $\xi$. We define the
  (left-oriented) \textbf{Ladder} process of $R$, denoted $L_R$, to be the
  sequence of distinct values attained by the infimum process $\{inf_{l\leq
    n}R(l)\}_n$. We denote the step distribution of $L_R$ by $L_\xi$.
  Thus $L_R$ has strictly negative steps with distribution equal to the
  hitting measure from 0 of $\Z^-$.
\end{defn}

\begin{lemma}\label{L:overshoot}
  Assume $\E(|\xi|^2)<\infty$. Start the walk $R$ at some $y\geq 0$, and let
  $\tau=T_{Z_-}$. Let $z:=-R(\tau)$ be the overshoot.
  \begin{enumerate}
  \item \label{l:hit_first_point} There exists a constant $c>0$ such that
    $\P_y(z=1)>c$, uniformly in $y\geq 0$ ($z=1$ means the walk hits the half line
    at its rightmost point.) In particular for $y=0$ we find $\P(L_\xi=-1)>c$.
  \item \label{l:Ladder} There exists a constant $C>0$ s.t. for any $k>0\ $
    $\P(L_\xi=-k)\leq C\P(\xi\geq k)$.
  \item \label{l:overshoot} Assume in addition $\P(\xi>t) \leq
    Ct^{-\alpha}$ for some $\alpha>2$. Then there is some $C>0$ s.t.\
    $\P_y(z=k)\leq Ck^{1-\alpha}$ for any $k>0$, uniformly in $y\geq 0$.
  \end{enumerate}
\end{lemma}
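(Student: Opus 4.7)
Plan. The three parts interact: part~(1) yields aperiodicity of $|L_\xi|$, while part~(2) supplies a tail bound on $L_\xi$; both feed into part~(3). I would address them in order.

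For~(1), I will apply \lemref{L:hit_x}(1) with $A=\Z_-\setminus\{-1\}$ and $x=-1$: since $x>\max A$, the lemma gives $\liminf_{y\to\infty}\P_y(T_{-1}<T_A)>c$, and the event $\{T_{-1}<T_A\}$ is exactly $\{z=1\}$. This handles every $y$ beyond some constant $Y_0$. For the finitely many $y\in[0,Y_0]$, the walk has at least one positive step, so from $y$ one can climb deterministically to $Y_0$ without touching $\Z_-$ with positive probability and then invoke the large-$y$ bound; minimizing the individual positive probabilities over the finite initial segment yields the uniform constant. Taking $y=0$ specializes to $\P(L_\xi=-1)>c$.

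For~(2), conditioning on the walk's position at the step before $\tau:=T_{\Z_-}$ gives
\[
\P(L_\xi=-k) \;=\; \sum_{j\ge 0} G(j)\,\P(\xi=-k-j), \qquad G(j):=\sum_{n\ge 0}\P_0(R_n=j,\ n<\tau).
\]
The crux is the uniform bound $G(j)\le C$. Trajectories that avoid the whole half-line $\Z_-$ also avoid the single point $\{-1\}$, so $G(j)\le G_{\{-1\}}(0,j)$. For the latter I will invoke Spitzer's identity $G_{\{-1\}}(0,j)=a(1)+a(j+1)-a(j)$, where $a$ is the potential kernel, together with the asymptotic $a(y)=|y|/\sigma^2+\kappa+o(1)$ valid for any aperiodic mean-zero finite-variance walk; this telescopes to $G_{\{-1\}}(0,j)\to a(1)+\sigma^{-2}$, which is bounded uniformly in $j$. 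Plugging this back and using the symmetry $\P(\xi\le-k)=\P(\xi\ge k)$ yields~(2).

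For~(3), I reduce $z$ to a renewal overshoot via the ladder process: the running minima of $R$ starting from $y\ge 0$ form a strictly decreasing walk with i.i.d.\ increments $L_\xi$, so $z$ equals the overshoot past level $y$ of the increasing renewal process $S_n=\sum_{i\le n}|L^{(i)}_\xi|$. Part~(1) gives $\P(|L_\xi|=1)>0$, so $|L_\xi|$ is aperiodic, and part~(2) combined with $\alpha>2$ gives $\E|L_\xi|<\infty$ and $\P(|L_\xi|\ge k)\le C\sum_{j\ge k}j^{-\alpha}\le C'k^{1-\alpha}$. Since $S_n$ is strictly increasing, its renewal measure satisfies $U(j)=\sum_n\P(S_n=j)\le 1$ pointwise, hence the standard decomposition
\[
\P(z=k) \;=\; \sum_{j\le y}U(j)\,\P(|L_\xi|=y-j+k) \;\le\; \P(|L_\xi|\ge k) \;\le\; Ck^{1-\alpha}
\]
holds uniformly in $y\ge 0$, giving~(3).

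The main obstacle is the uniform bound $G(j)=O(1)$ in part~(2). A naive estimate $G(j)=O(1+j)$ from the leading linear term of the potential kernel would only yield $\P(L_\xi=-k)\le Ck^{1-\alpha}$, matching (3) but strictly weaker than the tail $\P(\xi\ge k)$ claimed in~(2). The improvement comes from the cancellation $a(j+1)-a(j)\to\sigma^{-2}$ inside the single-point Green function formula, exploited through the comparison with $G_{\{-1\}}$.
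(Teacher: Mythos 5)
Your proof is correct and follows the paper's overall structure (the same last-exit decomposition in~(2), the same reduction to the ladder renewal process in~(3)), but the route through part~(2) is genuinely different. For~(1), your application of \lemref{L:hit_x}(1) with $A=\Z_-\setminus\{-1\}$ and $x=-1$ hits the event $\{z=1\}$ directly and is actually a slightly cleaner instantiation than what the paper writes; the small-$y$ argument is the same. For~(2), both you and the paper reduce to bounding the before-$\tau$ Green function $G(j)$ uniformly in $j$. The paper does this by writing $G(j)=\P_0(T_j<\tau)/\P_j(\tau<T_j)$ and reading the upper bound $C/j$ for the numerator and the lower bound $c/j$ for the denominator straight off \lemref{L:hit_x}(2), keeping the argument routed through the already-quoted lemma. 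You instead dominate $G(j)$ by the single-point Green function $g_{\{-1\}}(0,j)=a(1)+a(j+1)-a(j)$ and then appeal to the cancellation $a(j+1)-a(j)\to 1/\sigma^2$. This is valid, but note that the second-order statement about the potential kernel increments is a finer fact than \lemref{L:hit_x}; it does hold for mean-zero, finite-variance, aperiodic one-dimensional walks, but if you go this route you should cite it explicitly (e.g.\ to Spitzer) rather than derive it, since the paper deliberately keeps the argument at the level of \lemref{L:hit_x}. For~(3), your renewal-measure formulation is notationally different from the paper's (which sums over the last ladder point visited), but it is the same decomposition; the crucial input in both is $U(j)\le 1$ from strict monotonicity of the ladder process, after which the overshoot is dominated by the tail of $|L_\xi|$ from part~(2). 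Your preliminary remarks about aperiodicity of $|L_\xi|$ and $\E|L_\xi|<\infty$ in~(3) are harmless but unused: strict monotonicity alone gives $U(j)\le 1$, and no renewal theorem is invoked.
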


Note that by translation invariance similar estimates hold for the
hitting measure of $(-\infty,x)$ from $y$ for any $x$ and any $y\geq
x$.

\begin{proof}
  \begin{enumerate}
  \item By \lemref{L:hit_x}, we know that $\P_y(T_0<T_{Z_-})
    \xrightarrow[y\to \infty]{} c_0 > 0$, so the requisite bound holds for
    all large enough $y>y_0$. However, the walk can make positive steps as
    well, so for any $y>0$ there is a positive probability of exceeding
    $y_0$ before hitting the negatives.

  \item Starting the random walk at 0, $L_\xi=-z$ is the hitting point
    of $\Z_-$. Fix $k>0$, then $\P(z=k)$ is the sum of probabilities
    of all paths from 0 that hit $\Z_-$ at $-k$ and terminate there.
    Partitioning the paths according to the value of $R(\tau-1)$, we get:
    \[
    \P(z=-k) = \sum_{l\geq 0}
               \frac{\P_0(T_l < \tau) \P(\xi=l+k)}  {\P_l(\tau < T_l)}.
    \]
    By \lemref{L:hit_x}, $\P_0(T_l<\tau) \leq \frac{C}{l}$, and
    $\P_l(\tau < T_l) \geq \frac{c}{l}$. Together this gives
    \[
    \P(L_\xi = -z = -k) \leq C\P(\xi\geq k).
    \]

  \item We partition the paths of the ladder walk from $y$ to $-k$ according to the
  place from which the ladder walk made the jump to $-k$.
  Since $L_R$ is strictly decreasing, this gives
    \begin{align*}
    \P_y(z=k)
    &= \sum_{i=0}^y \P(L_R \text{ visits } i) \P(L_\xi=-(i+k)) \\
    &\leq \sum_{i=0}^y \P(L_\xi=-(i+k)) \leq \P(L_\xi\leq -k) < Ct^{1-\alpha},
    \end{align*}
    where the last inequality follows from the bound of (\ref{l:Ladder}) on
    $L_\xi$.
  \end{enumerate}
\end{proof}
 Assume, without loss of generality, that the $n'th$ particles starts from
$+\infty$. Let $x_n$ be the minimal point in the path of the $n$-th
particle and denote $J_n = |A_{n-1} \cap (x_n,\infty)|$, i.e.\ the
number of distinct points in $A_{n-1}$ which the $n'th$ particle has
passed before being added to the aggregate. Thus $J_n$ measures the
amount by which the $n'th$ particle penetrates into the aggregate.

The following lemma is the key ingredient of the proof of
\propref{P:renewal_alpha_geq3}:

\begin{lemma}\label{L:missAalpha3}
  Assume $\xi$ satisfies $\P(\xi>t)\lesssim t^{-\alpha}$ for some $c>0$,
  $\alpha>2$ and all $t>0$. Then there exists a constant $C>0$ s.t.
  \[
  \P(J_n > l \ | \ \F_{n-1}) \leq Cl^{2-\alpha} \log^{\alpha-1}l
  \]
  uniformly in $\F_{n-1}$. In particular the $J_n$'s are stochastically
  dominated by i.i.d.\ random variables with the above tail.
\end{lemma}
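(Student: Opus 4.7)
Without loss of generality assume the $n$-th particle enters from $+\infty$; the case of entry from $-\infty$ is symmetric. Write $A = A_{n-1}$ and enumerate its points in decreasing order as $p_1 > p_2 > \cdots$. The event $\{J_n > l\}$ is equivalent to the walker's minimum-reached value $x_n$ before hitting $A$ satisfying $x_n < p_{l+1}$; in particular, since the $A$-points are integers, $x_n \le p_1 - l$. My plan is thus to bound the probability, uniform in the configuration of $A$, that the walker from $+\infty$ reaches the half-line $(-\infty, p_1 - l]$ before first hitting $A$.

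The approach is a recursive descent analysis driven by \lemref{L:overshoot}. Its part~(\ref{l:overshoot}) is the crucial input: whenever the walker first crosses below a prescribed level, the overshoot $z$ satisfies $\P(z = k) \le C k^{1-\alpha}$, and hence $\P(z \ge k) \lesssim k^{2-\alpha}$, uniformly in the walker's starting position. Combined with the strong Markov property this permits iteration. First, the walker's very first descent below $p_1$ overshoots by at least $l$ with probability $\lesssim l^{2-\alpha}$, in which case $\{J_n > l\}$ holds immediately; this already yields the baseline $l^{2-\alpha}$ factor. In the complementary case the walker lands at some $y \in (p_1 - l, p_1) \setminus A$, and by strong Markov the residual problem is an analogous descent from $y$, with strictly fewer aggregate points left to cross.

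To execute the iteration cleanly I would decompose the descent dyadically. For each scale $2^i$ with $i = 0, 1, \ldots, \lceil \log_2 l\rceil$, consider the event that the first overshoot lies in the dyadic window $[2^i, 2^{i+1})$: by \lemref{L:overshoot} this event has probability $\lesssim 2^{i(2-\alpha)}$, and leaves at least $l - 2^i$ aggregate points still to be passed by subsequent descents. Substituting the inductive bound for the residual continuation probability, each scale contributes $\lesssim l^{2-\alpha}$, and summing the contributions over the $O(\log l)$ dyadic scales produces the $\log^{\alpha-1} l$ factor on top of the $l^{2-\alpha}$ baseline, closing the induction on $l$.

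The principal technical obstacle is making the recursion genuinely uniform in the configuration of $A$ (and hence in $\F_{n-1}$). After each descent the walker lands at an arbitrary position within an arbitrary remainder of $A$, so one must verify that the overshoot tail used at each stage depends only on the next $A$-point below, and not on the fine geometric structure of $A$---this is precisely the content of \lemref{L:overshoot}. Once the uniform conditional bound on $\P(J_n > l \mid \F_{n-1})$ is in hand, the final stochastic-domination statement follows by a standard coupling: since the conditional tail bound depends only on $l$, one may construct an i.i.d.\ sequence of random variables with the prescribed tail and couple each $J_n$ so as to be dominated by the corresponding term.
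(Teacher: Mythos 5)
Your approach is genuinely different from the paper's, and as written it contains several gaps.

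First, the reduction at the start is not correct. You propose to bound the probability that the walker reaches $(-\infty, p_1 - l]$ before hitting $A$, observing that $\{J_n > l\} \subset \{x_n \le p_1 - l\}$. But this containment is strict and the superset can have much larger probability: take $A=\{0\}$ (so $p_1 = 0$), then $\lim_{y\to\infty}\P_y\bigl(T_{(-\infty,-l]} < T_0\bigr)$ is of order $1/l$ for walks with $\alpha>3$, which exceeds the target $Cl^{2-\alpha}\log^{\alpha-1}l$. Likewise, your assertion that ``the first descent overshoots by at least $l$ $\Rightarrow$ $\{J_n>l\}$ holds immediately'' is false: overshooting $l$ \emph{positions} does not mean $l$ \emph{aggregate points} were passed. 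The quantity you must track throughout is the number of points of $A$ jumped over, and the paper makes this precise by shifting so that exactly $l$ points of $A$ lie in $\Z_+$ and bounding $\P_y(T_{\Z_-}<T_A)$ there.

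Second, the dyadic recursion is not correctly closed, and the essential input is missing. Your claim that ``each scale contributes $\lesssim l^{2-\alpha}$'' already fails at the small scales: with the inductive hypothesis $g(l')\lesssim (l')^{2-\alpha}\log^{\alpha-1}l'$, the $i=0$ term contributes roughly $p_0\,g(l-1)\approx (1-c)\,l^{2-\alpha}\log^{\alpha-1}l$, not $l^{2-\alpha}$; and ``summing $O(\log l)$ scales'' would give a $\log l$ factor, not $\log^{\alpha-1}l$. More importantly, your analysis uses only the overshoot tail \lemref{L:overshoot}(\ref{l:overshoot}) and never invokes \lemref{L:overshoot}(\ref{l:hit_first_point}), the uniform lower bound $c$ on hitting the next $A$-point. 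That bound is what caps the total ``miss'' mass at $1-c<1$; without it the geometric sum $\sum_i 2^{i(2-\alpha)}$ controlling your small scales is not even bounded by $1$ (and blows up as $\alpha\downarrow 2$), so the induction does not close. The paper sidesteps all of this with a clean union bound on the ladder process: to avoid $\ge km$ points one must either make $\ge m$ non-hitting passes (probability $\le e^{-cm}$ from \lemref{L:overshoot}(\ref{l:hit_first_point})) or make one jump over $\ge k$ points (probability $\le Ck^{2-\alpha}$ from \lemref{L:overshoot}(\ref{l:overshoot})), and the $\log^{\alpha-1}l$ factor comes from optimizing $m\approx\log l$, $k\approx l/\log l$. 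A carefully executed version of your recursive scheme can be made to work, but it needs both parts of \lemref{L:overshoot} and a more delicate split of scales than the one you give.
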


\begin{proof}
  The Lemma is easy consequence of the following statement: There
  exist positive constants $c,C$ such that for any $m,k>0$ and for any set $A\subset \Z^+$
with $|A|\geq km$ and any $y>\max A$,
  \begin{equation}\label{e:penetrate_3}
    \P_y(T_{-} < T_A) < e^{-cm} + Cmk^{2-\alpha}.
  \end{equation}
  Indeed, we may shift $A_{n-1}$ so that it has exactly $l$ non-negative
  elements, and apply \eqref{e:penetrate_3} with $A=A_{n-1}\cap\Z^+$, with
  $k=\frac{l}{C log l}$ and $m=C\log l$ for a large enough constant $C>0$.

  To prove \eqref{e:penetrate_3}, we use the bound
  \[
  \P_y(T_{-} < T_A) < \P_y(L_R \text{ avoids } A),
  \]
  since if the ladder process visits $A$, then the corresponding time for
  the walk $R$ is before $T_-$. Henceforth we will only consider the ladder
  process and its steps and time.

  Let $M$ (for many) be the event that there are more then $m$ steps in
  which $L$ jumps over points in $A$ without landing in $A$. Let $B$ (for
  big) be the event that $L$ jumps over at least $k$ points of $A$ in a
  single step. Since $|A|\geq km$, it is clear that in order to miss $A$,
  one of $M$ or $B$ must happen. And therefore
  \[
  \P_y(T_- < T_A) \leq \P_y(M) + P_y(B\cap M^c).
  \]
  Let $\{\rho_i\}_{i=1}^{I}$ be the sequence of times at which the
  ladder walk $L_R$ passes over or hits points in $A$. Formally,
  define $\rho_0=0$ and define inductively
  \[
  \rho_{i+1} = \inf\{t : A\cap [L(t),L(\rho_i))\neq\emptyset\}.
  \]
  This defines a finite sequence since $A$ is finite. Let $\rho_I$ be the
  time the process passes the last point of $A$. Let $\H_k$ be the natural
  filtration of $L_R$. By \lemref{L:overshoot}(\ref{l:hit_first_point}), on
  the event $i<I$ we have $\P(L_R(\rho_{i+1}) \in A | \H_{\rho_i}) > c$.
  Therefore the probability of avoiding $A$ on at least $m$ jumps is
  \begin{equation}\label{eq:PM_small}
  \P(M) \leq \prod_{i=0}^{m-1} 1 - \P\big(L_R(\rho_{i+1}) \in A \ \big | \ \H_{\rho_i}\big)
        \leq e^{-cm}
  \end{equation}

  Let $B_i$ be the event that $|A \cap (L_R(\rho_i),L_R(\rho_{i+1}))| > k$.
  Then for $B_i$ to occur, $L_R$ must make a large step. By
  \lemref{L:overshoot}(\ref{l:overshoot}), $\P(B_i |\H_{\rho_i}
  ) <
  Ck^{2-\alpha}$. we deduce
  \[
  \P_y\big (B_i \ \big | \  \bigcap_{j=0}^{i-1}B_j^c\big) < Ck^{2-\alpha}.
  \]
  The event $B\cap M^c$ implies that at least one of the events
  $B_0,\dotsc,B_{I-1}$ occurs, and that $I<m$, therefore
  \[
  \P_y(B\cap M^c) \leq \sum_{i=0}^I \P_y(B_i | \bigcap_{j=0}^{i-1}B_j^c)
  \leq m Ck^{2-\alpha}.
  \]
  Together with \eqref{eq:PM_small} this yields \eqref{e:penetrate_3}.
\end{proof}

\begin{proof}[Proof of \propref{P:renewal_alpha_geq3}]
  By symmetry it suffices to prove the proposition for right renewal times. Let
  $\{n_i\}$ be the sequence of times at which particles start from
  $+\infty$, and let $\hat J_i=J_{n_i}$ be the amounts by which
  these particles penetrate the aggregate.
  By \lemref{L:missAalpha3}, $\hat J_i$ are stochastically dominated by
  i.i.d.\ random variables: $\hat J_i\leq Y_i$ with
  \[
  \P(Y_i>t) < Ct^{2-\alpha+\ep}.
  \]
  Where $\ep>0$ is such that $2-\alpha+\ep<-1$ (or $\ep<\alpha-3$).
  Also, by \lemref{L:overshoot}(\ref{l:hit_first_point}), we can have
  $\P(Y_i=0)>c>0$.

  First, we claim that a sufficient condition for $n_k$ to be a strong
  right renewal time is that $\hat J_i \leq i-k$ for all $i\geq k$. This
  follows by induction on $i$: - if at time $n_i$ there are at least $i-k$
  particles to the right of $a_{n_k}$, and $J_{n_i} < i-k$ then the
  path of $a_{n_i}$ does not pass $a_{n_k}$ and $a_{n_i}$ is added to
  the right of $a_{n_k}$. Note that particles arriving from $-\infty$ do
  not pose a problem here for two reasons. First, these can only increase
  the number of points to the right of $a_{n_k}$ and second, after the
  first left strong renewal time even that will not occur.

  Consider an infinite family of i.i.d.\ variables $\{Y_i\}_{i\in\Z}$ with
  distribution as above, coupled with the variables $\{J_i\}_{i\in \Z_+}$ so that $Y_i\geq J_i$ for $i\geq 0$. Define
  \[
  \Cc = \{n : \forall m\geq n, Y_m\leq m-n\}.
  \]
  Equivalently, $\Cc$ is the complement of the union of (open) intervals
  $\bigcup (m-Y_m,m)$. Clearly $\Cc$ is a translation invariant renewal
  process, and by the above, $\Cc\cap\Z^+$ is a subset of the strong right
  renewal times.

  To bound from below the probability that $n$ is a strong renewal time, we find
  the density of $\Cc$. By translation invariance this is
  \[
  \P(0\in\Cc) = \P\left(\forall m\geq0, Y_m<m\right)
             \geq \prod_{m\geq 0} c \vee (1-(n-m)^{2-\alpha+\ep})
             > 0.
  \]
\end{proof}



We are now ready to prove \thmref{T:density_alpha_geq3}.
\begin{proof}[Proof of \thmref{T:density_alpha_geq3}]
  First, observe that after the first right and left strong renewal times,
  no particle coming from $-\infty$ affects the growth of the right side of
  the aggregate. By symmetry it is therefore enough to consider only particles
  coming from $+\infty$ and show that
  \[
  \lim_{m\to\infty} \frac{|A_\infty \cap [0,m]|}{m}
  = \lim_{n\to\infty} \frac{n}{D_n}
  \]
  exists and is a.s.\ constant.

  Let $\{r_k\}_{k\geq 1}$ be the sequence of all strong right renewal
  times. Denote $w_k = r_{k+1}-r_k$: the number of particles in the
  $k'th$ renewal interval. Since the growth to the right of the aggregate
after time $r_k$ does not depend on the history before time $r_k$,
after the first left renewal time, $w_k$ form an i.i.d.\ sequence.
By \propref{P:renewal_alpha_geq3} and the Renewal Theorem, $\E w_k <
\infty$, and therefore by the law of large numbers $r_k = B_1 k(1+o(1))$
for some constant $B_1$.

  Denote $d_k = \max A_{r_{k+1}} - \max A_{r_k}$: the diameter of the
  $k'th$ renewal interval. $d_k$ also form an i.i.d.\ sequence. As noted in
  the proof of \thmref{T:thirdmoment}, $d_k$ is stochastically dominated by
  $\sum_{i=1}^{w_k} Y_i$ where $Y_i$ are i.i.d.\ variables with finite
  expectation, and thus $\E d_k < \infty$. We can apply the Renewal Reward
  Theorem (see e.g.\ \cite{durrett2010probability}) to conclude that $\sum^k d_i = B_2
  k(1+o(1))$ for some constant $B_2\geq 1$.

  The result follows (with $B=B_1/B_2$) for the subsequence where $n$ is a
  renewal time and $m = \max A_n$ at the renewal times. Existence of the
  limit over all $m$ and $n$ follows by sandwiching $m$ between two renewal
  points (or $n$ between two renewal times.)
\end{proof}

Note that while the above theorem proved that the aggregate has
positive density, it also easy to see that under the conditions of
the theorem, if $R$ is not a simple random walk, the density of the
limit aggregate is not $1$, as holes may happen in any renewal
interval.

\subsection{Walks with $2<\alpha<3$}
 When $2<\alpha(R)<3$, the structure of the limit
aggregate is quite different from the case $\alpha>3$, as will be
seen in theorem \ref{t:den_0} and in claim
\ref{c:finite_renewal_23}.

Our starting point will be once again the bounds on the diameters
$D_n$ given in \cite{AABK}. The following theorem summarizes the
lower bound (\cite{AABK} theorem $5.1$) and the upper bound
(\cite{AABK} theorem $5.3$)
 for our special case:
\begin{thm} \label{T:23_diam}
  Fix $\alpha\in(2,3]$ and let $\beta = \frac{2}{\alpha-1}$. If the random
  walk is such that $\P(\xi>t) \approx t^{-\alpha}$ and $\E\xi=0$,
  then a.s.\ $\max A_n = n^{\beta+o(1)}$ and $-\min A_n = n^{\beta+o(1)}$.
\end{thm}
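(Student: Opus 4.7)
This theorem is cited directly from \cite{AABK1} (Theorems 5.1 and 5.3), so the most efficient plan is simply to invoke those results, specialized to the step distribution $\P(\xi > t) \approx t^{-\alpha}$. For a self-contained sketch, I would argue as follows.

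For the upper bound, fix $n$ and write $M = \max A_{n-1}$. By the formula \eqref{eq:Ppmrecurr}, the probability that the $n$th particle (arriving from $+\infty$) is glued at a point $x > M$ factors as
\[
\P(a_n = x \mid \F_{n-1}) = \frac{\P_{+\infty}(T_x < T_{A_{n-1}})}{\P_x(T_{A_{n-1}} < T_x)} \, p(x, A_{n-1}).
\]
The denominator is $\asymp (x-M)^{-1}$ by \lemref{L:hit_x}(\ref{enu:escp dist}), the numerator is at most $1$, and $p(x, A_{n-1})$ is controlled via the step tail $p_{x,a} \lesssim |x-a|^{-1-\alpha}$ summed over points of the aggregate. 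Summing the resulting estimate over $x > M+r$ produces a power-law tail bound on the increment $a_n - M$, which combined with a Borel--Cantelli argument (and the symmetric bound for $\min A_n$) should yield $\max A_n \leq n^{\beta+o(1)}$ a.s.

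For the lower bound, I would use \lemref{L:overshoot}(\ref{l:overshoot}) to show that with probability bounded below each new particle overshoots the current boundary by at least $r$ and is then glued there by a single jump to a nearby aggregate point; the lower bound on $p(x, A_{n-1})$ here can come from the mere presence of the point $M$ itself. Accumulating these increments over many particles and applying a law-of-large-numbers type argument should yield $\max A_n \geq n^{\beta-o(1)}$ a.s.

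The principal obstacle is extracting the exact exponent $\beta = 2/(\alpha-1)$. This value must arise from a delicate balance between the $r^{1-\alpha}$ tail from the ladder overshoot, the $(x-M)^{-1}$ scaling of the escape probability from \lemref{L:hit_x}, and the self-consistent growth of the aggregate (whose own current diameter appears in the bounds for the next particle's increment). Converting expected-value estimates into almost-sure bounds further requires concentration-type control, which is the principal technical content of Theorems 5.1 and 5.3 of \cite{AABK1}; for the present paper I would simply quote those statements.
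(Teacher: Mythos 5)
Your proposal correctly identifies that this theorem is imported wholesale from Theorems~5.1 and~5.3 of \cite{AABK1}, and the paper does exactly that: it gives no proof, only the citation, plus the remark that the lower bound in \cite{AABK1} was stated for $\diam(A_n)$ but its proof actually controlled $\max A_n$ and $-\min A_n$ separately (which is the form needed here). Your sketch of the underlying mechanism is a reasonable gloss of what happens in \cite{AABK1}, but it is not required here, and you might have flagged the $\diam$-versus-$\max/\min$ issue explicitly, since that is the one nontrivial observation the paper itself makes when invoking the citation.
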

Note that while the lower bound (\cite{AABK} theorem $5.1$) was
stated for $\diam(A_n)$, the proof dealt separately with $\max A_n$
and $\min A_n$.

We first show that the renewal structure that existed for $\alpha>3$
no longer holds:
\begin{claim}\label{c:finite_renewal_23}
If $\P(\xi>t) \approx t^{-\alpha}$ for some $2 < \alpha < 3$ then
there are only finitely many weak renewal times in the $1-DLA$
process.
\end{claim}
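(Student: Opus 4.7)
The strategy is to exploit the heavy-tailed jumps available when $\alpha<3$: a walk from $+\infty$ has probability $\approx L^{2-\alpha}$ of overshooting a half-line by at least $L$, and this tail is not summable as the aggregate grows. I will show that for every candidate right-max time $n$ (a time with $a_n>\max A_{n-1}$), almost surely infinitely many subsequent particles from $+\infty$ are glued strictly to the left of $a_n$, which already rules $n$ out as a weak right renewal time.

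The first step is to establish a lower bound matching \lemref{L:overshoot}(\ref{l:overshoot}),
\[
\lim_{y\to\infty}\P_y(z\ge L)\ge cL^{2-\alpha}.
\]
This follows by inserting the matching lower bound $\P(L_\xi=-k)\ge c'k^{-\alpha}$ (coming from $\P(\xi=-k)\approx k^{-\alpha}$ together with the argument in \lemref{L:overshoot}(\ref{l:Ladder})) into the decomposition $\P(z=k)=\sum_i\P(L_R\text{ visits }i)\,\P(L_\xi=-(i+k))$, and using that $\P(L_R\text{ visits }i)\to 1/|\E L_\xi|>0$ as $i\to\infty$ (the ladder mean is finite for $\alpha>2$).

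Now fix a candidate right-max time $n$ and a small $\ep>0$. By \thmref{T:23_diam} there is an a.s.\ finite $M_0$ with $\max A_m\le m^{\beta+\ep}$ and $\min A_m\ge-m^{\beta+\ep}$ for all $m\ge M_0$. Take $L_m:=3m^{\beta+\ep}$, so that $a_n-L_m<\min A_{m-1}$ for $m\ge M_0$. The core estimate is that, conditional on $\F_{m-1}$ and on particle $m$ coming from $+\infty$, the walk reaches a position $\le a_n-L_m$ before hitting $A_{m-1}$ with probability at least $cL_m^{2-\alpha}$. This combines Step 1 with an avoidance accounting along the ladder process, dual to the argument in the proof of \lemref{L:missAalpha3}: a single ladder step of length $\ge L_m$ already has probability of this order, and the probability that such a step is the one that crosses $a_n$ without first landing on one of the at-most-$m$ aggregate points above $a_n$ remains of the same order. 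Once the walk has reached a position below $\min A_{m-1}$, the symmetric analog of \lemref{L:hit_x}(\ref{enu:limy>c}) gives probability $\ge c'>0$ that it next hits $\min A_{m-1}$ from below, in which case the penultimate position $a_m$ satisfies $a_m<\min A_{m-1}\le a_n$.

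Combining these estimates, $p_m:=\P(a_m<a_n,\ \text{particle }m\text{ from }+\infty\mid\F_{m-1})\ge cm^{-\gamma'}$ with $\gamma'=(\beta+\ep)(\alpha-2)$. Since $\beta(\alpha-2)=2(\alpha-2)/(\alpha-1)<1$ for $\alpha<3$, choosing $\ep$ small enough ensures $\gamma'<1$ and therefore $\sum_m p_m=\infty$ a.s. The conditional Borel--Cantelli lemma (applicable because $p_m$ is $\F_{m-1}$-measurable) then forces the event $\{a_m<a_n\}$ to occur for infinitely many $m$ a.s.; in particular $n$ is almost surely not a weak right renewal time. A countable union over $n$ and the symmetric argument for the left give that almost surely there are no (hence only finitely many) weak renewal times. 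The principal technical hurdle is the avoidance estimate in Step 2: the overshoot bound is natural for a walk stopped at a half-line, and extending it to a walk stopped at $A_{m-1}$ calls for a ladder-process decomposition in the spirit of, but dual to, the argument behind \lemref{L:missAalpha3}.
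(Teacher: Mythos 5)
Your argument is correct, but it takes a genuinely different route than the paper. The paper applies \lemref{l:jump_over} directly: it lower-bounds the probability that a fresh particle lands in a gap of $\overline{A_{n-1}}$ while avoiding the points $B_k=\{a_n,\dots,a_{n+k-1}\}$ glued after time $n$, then multiplies these bounds over $k\leq n$ to obtain $\P(n\text{ is a weak renewal time, }n>n_0)\lesssim e^{-n^{\delta}}$ for some $\delta>0$, which is summable, so Borel--Cantelli gives finitely many renewal times. You instead fix a candidate right-max time $n$ and show that for $m>n$ the $m$-th particle (from $+\infty$) jumps over the \emph{entire} aggregate $A_{m-1}$ and is glued from the far side with conditional probability $\gtrsim m^{-\gamma'}$, $\gamma'<1$; the conditional second Borel--Cantelli lemma then rules out $n$ almost surely, and a countable union over $n$ finishes. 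Both routes hinge on the same two ingredients---the matching lower tail bound $\P(\xi<-t)\gtrsim t^{-\alpha}$ and the two-sided diameter estimate \thmref{T:23_diam}---but the paper's iterated product also yields a quantitative stretched-exponential bound, whereas your argument gives only the almost-sure dichotomy (which suffices for the claim).

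Two details in your write-up are imprecise, though not fatal. First, the ``avoidance accounting along the ladder process, dual to \lemref{L:missAalpha3}'' is unnecessary: a walk from $+\infty$ stays strictly above $\max A_{m-1}$ until its first crossing of that level, so if that crossing overshoots all the way below $\min A_{m-1}$ then $A_{m-1}$ has been avoided \emph{automatically}. Your core estimate is therefore nothing more than the lower bound on the overshoot of the half-line $(-\infty,\max A_{m-1}]$, which you prove in Step 1, applied with overshoot threshold $\Lambda=\max A_{m-1}-(a_n-L_m)\lesssim m^{\beta+\ep}$. Second, the final gluing step (from a position $\leq a_n-L_m<\min A_{m-1}$, hitting $\min A_{m-1}$ from below with positive probability so that $a_m<\min A_{m-1}\leq a_n$) should cite the mirrored version of \lemref{L:overshoot}(\ref{l:hit_first_point}), not \lemref{L:hit_x}(\ref{enu:limy>c}), since the relevant event is first entering the half-line $[\min A_{m-1},\infty)$ precisely at its leftmost point.
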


The main tool for proving the claim, is a lower bound on the
probability of hitting a set $A$ while avoiding a set $B$ to its
right, given in the next Lemma:

\begin{lemma}\label{l:jump_over}
Assume  $P(\xi>t) \geq ct^{-\alpha}$ $\forall t\in \R_+$ for some
constant $c>0$, and $E(|\xi|^2) < \infty$.  There is a constant
$c>0$,
 such that for any two finite sets of $A,B$
satisfying $\max A < \min B$

$$\P_{\infty}(R(T_{A\cup B}-1) \in \overline{A}) \geq c\cdot (\diam(A) - |A|)\cdot \diam(A\cup B)^{1-\alpha}$$
\end{lemma}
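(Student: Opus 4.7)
By the symmetry $\P_\infty=\tfrac12(\P_{+\infty}+\P_{-\infty})$, it suffices to lower bound $\P_{+\infty}(R(T_{A\cup B}-1)\in\overline A)$ and invoke the analogous estimate from $-\infty$. Starting from the renewal-type representation \eqref{eq:Ppmrecurr}, for any $x\notin A\cup B$,
\[
\P_{+\infty}\bigl(R(T_{A\cup B}-1)=x\bigr)\,=\,\tilde G(x)\,p(x,A\cup B),\qquad \tilde G(x):=\frac{\P_{+\infty}(T_x<T_{A\cup B})}{\P_x(T_{A\cup B}<T_x)},
\]
so the lemma reduces to lower-bounding $\sum_{x\in\overline A\setminus A}\tilde G(x)\,p(x,A\cup B)$ by $c(\diam A-|A|)L^{1-\alpha}$, with $L=\diam(A\cup B)$. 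The strategy is to isolate the mechanism in which the walk descends from $+\infty$, makes a single long jump over $B$ landing in a hole of $A$, and is then absorbed into $A\cup B$ on its next step.

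For $y>\max B$, \lemref{L:hit_x}(\ref{enu:escp dist}) applied to $A\cup B\subseteq(-\infty,\max B]$ gives $\P_y(T_{A\cup B}<T_y)\lesssim 1/(y-\max B)$, and since $\P_{+\infty}(T_y<T_{A\cup B})\to 1$ for such $y$, I get $\tilde G(y)\gtrsim y-\max B$. Plugging this into the Green's function renewal identity $\tilde G(x)=\sum_{y\notin A\cup B}\tilde G(y)\,\P(\xi=x-y)$ and retaining only the contribution from $y>\max B$, symmetry of $\xi$ and the substitutions $D_x=\max B-x\in[1,L]$, $k=y-\max B$ yield
\[
\tilde G(x)\,\gtrsim\, \sum_{k\ge 1}k\,\P(\xi=D_x+k)\,=\,\E\bigl[(\xi-D_x)\mathbf{1}_{\xi>D_x}\bigr]\,=\,\sum_{k\ge 1}\P(\xi\ge D_x+k).
\]
The tail hypothesis $\P(\xi>t)\ge ct^{-\alpha}$ (together with $\alpha>1$) bounds the last sum below by $cD_x^{1-\alpha}\ge cL^{1-\alpha}$, uniformly over holes $x\in\overline A\setminus A$.

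Having the uniform estimate $\tilde G(x)\gtrsim L^{1-\alpha}$, the remaining task is to verify
\[
\sum_{x\in\overline A\setminus A}p(x,A\cup B)\,\gtrsim\,\diam A-|A|.
\]
I intend to handle this by an analogous tail-integration applied to the ``second jump'': summing jointly over pairs $(x,b)$ with $x\in\overline A\setminus A$ and $b\in A\cup B$, the pointwise factors $\P(\xi=b-x)$ may be regrouped by the endpoint $b$ and interchanged with the hole-sum, so that only cumulative tail quantities of $\xi$ appear in place of individual atoms. The resulting aggregate is linear in the number of intermediate holes, delivering the claimed bound.

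\noindent\textbf{Main obstacle.} The principal difficulty lies in this last step. The hypotheses of the lemma furnish only a \emph{tail} lower bound on $\xi$, with no pointwise control over $\P(\xi=k)$, so $p(x,A\cup B)$ cannot be lower-bounded hole-by-hole. The tail-integration rescue succeeds only once the sum is rearranged to produce cumulative tails of $\xi$ in place of atoms; carrying this out carefully --- in particular separating short-jump contributions into $A$ from long-jump contributions into $B$ (of size up to $L$) --- is the technical core of the argument.
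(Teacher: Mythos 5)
Your opening reduction and the lower bound on $\tilde G$ are both sound: the identity $\P_{+\infty}(R(T_{A\cup B}-1)=x)=\tilde G(x)\,p(x,A\cup B)$ is exactly \eqref{eq:Ppmrecurr} with $n=1$, the bound $\tilde G(y)\gtrsim y-\max B$ for large $y$ follows from \lemref{L:hit_x}, and the tail integration $\tilde G(x)\gtrsim D_x^{1-\alpha}\geq L^{1-\alpha}$ is correct. The problem is the step you yourself flag as the ``main obstacle'': the inequality $\sum_{x\in\overline A\setminus A}p(x,A\cup B)\gtrsim\diam A-|A|$ is \emph{false} under the hypotheses of the lemma, and the proposed ``regrouping by endpoint'' rescue cannot repair it. After regrouping one is left with sums of the form $\sum_{x\in\overline A\setminus A}\P(\xi=b-x)$ for fixed $b\in A\cup B$, but $b-(\overline A\setminus A)$ is a set with gaps, not an interval, so these do \emph{not} telescope into tail probabilities of $\xi$. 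Concretely, take $A=\{0,N,2N,\dots,N^2\}$ and $B=\{N^2+1\}$: then $\diam A-|A|\approx N^2$, but if $\xi$ concentrates its mass on $O(1)$ steps (as it may, since only a tail lower bound is assumed), one computes $\sum_x p(x,A\cup B)\approx CN=o(\diam A-|A|)$. Your two uniform bounds, taken together, would therefore give a lower bound that is off by a factor of $N$ in this example, so the route is genuinely blocked, not merely technically incomplete.

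The paper circumvents precisely this issue by not attempting to control the \emph{last} step before absorption. Instead it lower-bounds the probability of the walk first landing in $\overline A\setminus A$ (using a single long jump, so that the per-hole factor $\sum_z p_{z,a}$ is a genuine tail $\P(\xi\geq 2L)\gtrsim L^{-\alpha}$ rather than an atom), and then invokes \lemref{L:overshoot}(\ref{l:hit_first_point}) to say that from any point of $\overline A\setminus A$ the walk hits $A$ before leaving $\overline A$ with probability bounded below by a constant. That second, multi-step event is what replaces the uncontrollable single-step quantity $p(x,A\cup B)$; it only uses the fact that the walk has finite variance. In your decomposition this corresponds to a contribution to $\tilde G(x)$ coming from re-entries from neighboring holes, which your lower bound on $\tilde G$ (using only $y>\max B$) discards. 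So the decomposition is exact, but your proposed per-factor bounds lose exactly the mechanism that the paper exploits.
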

\begin{proof}
By \lemref{L:overshoot}(\ref{l:hit_first_point}), there is some
$c>0$ (independent of $A$,$B$), such that if the random walker is at
$\overline{A}\setminus A$, it will hit $A$ before leaving
$\overline{A}$ with probability $\geq c$. Therefore it is enough to
bound from below the probability that a random walker from infinity
hits $\overline{A}\setminus A$ before hitting $\overline{B}\cup A$.

We bound this by decomposing the set of all paths from infinity that
hit $\overline{A}$ without hitting $\overline{B}$ according to the
place from which the walker jumps to $\overline{A}$, we get (using
the same counting argument as in \eqref{eq:Ppmrecurr})
\begin{eqnarray}
\lefteqn{\P_\infty(T_{\overline{A}\setminus A} < T_{\overline{B}\cup
A}) =  \sum_{z\notin
 \overline{A}\cup \overline{B}}\frac{\P_\infty(T_z <
T_{\overline{A}\cup\overline{B}})p(z,
(\overline{A}\setminus A))}{\P_z(T_{\overline{A}\cup\overline{B}} < T_z)}}  && \nonumber \\
  & \geq  \sum_{z\geq \max{(A\cup B)} + \diam(A\cup
B)}\frac{\P_\infty(T_z < T_{\overline{A}\cup\overline{B}})p(z,
\overline{A}\setminus A)}{\P_z(T_{\overline{A}\cup\overline{B}} < T_z)} & \nonumber \\
 & \geq  \sum_{z\geq \max{(A\cup B)} + \diam(A\cup B)} c
\dist(z,A\cup B)
p(z, \overline{A}\setminus A)  &\label{use_distance}\\
 & =  \sum_{a\in \overline{A}\setminus A} \left(\sum_{z\geq \max{(A\cup B)}
 + \diam(A\cup B)} c \dist(z,A\cup B) p_{z, a} \right) &\\
  & \geq   c\, |\overline{A}\setminus A|\, (\diam(A\cup B) \, \P(\xi\geq 2\cdot \diam(A\cup B))  &\label{need_good_dist}\\
 & \geq  c\cdot (\diam(A) - |A|)\cdot \diam(A\cup B)^{1-\alpha}&
\end{eqnarray}
The inequality in \ref{use_distance} following by using both parts
of \lemref{L:hit_x}.
%
\end{proof}

\begin{proof} [Proof of claim \ref{c:finite_renewal_23}]:
We now use our bounds on the diameter of $A_n$ to prove the claim:

Fix some $\ep>0$.   Let $n_0$ be the (random) minimal $n_0$ such
that for all $n>n_0$
$$n^{\frac{2}{\alpha-1}-\epsilon} \leq \diam(A_n) \leq
n^{\frac{2}{\alpha-1}+\epsilon}.$$  By \thmref{T:23_diam},
$n^{\frac{2}{\alpha-1}-o(1)} \leq \diam(A_n) \leq
n^{\frac{2}{\alpha-1}+o(1)}$ a.s. , so $n_0$ is finite a.s.

Fix $n$. Let $E^w_k$ ($k\geq 0$) be the event that the particle at
time $n+k$ either starts at $-\infty$ or is glued to the right of
$A_{n-1}$, and that $ (n-1)^{\frac{2}{\alpha-1}-\epsilon} \leq
\diam(A_{n-1}) \leq (n-1)^{\frac{2}{\alpha-1}+\epsilon}.$
 If $n$ is a weak
right renewal time and $n>n_0$, then $\bigcap_{k\geq 0}E^w_k$ must
occur. Therefore
$$\P(\text{n is a weak renewal time and } n>n_0) \leq \P(E^w_0)\prod_{k\geq 1}
\P(E^w_k \ | \ \bigcap_{j=1}^{k-1}E^w_j)$$ Let $B_k =
\{a_n,a_{n+1},..,a_{n+k-1}\}$. By applying Lemma \ref{l:jump_over}
with respect to the sets $A=A_{n-1}$ and $B=B_k$, we get that
\begin{eqnarray*}
\lefteqn{P(E^w_k \ | \ \bigcap_{j=1}^{k-1}E^w_j) } &&\\
&\leq & 1 -  c\cdot (\diam(A_{n-1}) -
|n-1|)\cdot \diam(A_{n+k-1})^{1-\alpha} \\
&\leq & 1 -
cn^{\frac{2}{\alpha-1}-\epsilon}(n+k)^{(\frac{2}{\alpha-1}+\epsilon)(1-\alpha)}\\
&\leq&1 - c n^{\frac{2}{\alpha-1}}(n+k)^{-2}(n+k)^{-\epsilon \alpha}
\end{eqnarray*}

And therefore
\begin{eqnarray*}
\lefteqn{P(\text{n is a weak renewal time and }n>n_0) \leq
P(E^w_0)\prod_{k\geq 1} P(E^w_k \ | \ \bigcap_{j=1}^{k-1}E^w_j) }&&\\
& \leq & \prod_{1\leq k \leq n}P(E^w_k \ | \ E^w_{k-1}) \\
& \leq & \prod_{1\leq k \leq n}\left(1 - c
n^{\frac{2}{\alpha-1}}(n+k)^{-2}(n+k)^{-\epsilon \alpha}\right)
\\ &\leq& \left(1 - cn^{\frac{2}{\alpha-1} - 2 - \epsilon\alpha}
\right)^n  \\ & \leq & e^{-n^{\frac{2}{\alpha-1}-1-\epsilon \alpha}}
\end{eqnarray*}

Thus for any $2<\alpha<3$ and $\epsilon$ small enough \[\sum_{n\geq
1}P(\text{n is a weak renewal time and }n>n_0) < \infty\] So by the
Borel-Cantelli lemma there are a.s. only finitely many weak renewal
times bigger than $n_0$. Since $n_0$ is finite a.s., there are a.s.
only finitely many weak right renewal times. The case of left
renewal times follows by symmetry.
\end{proof}

Next we will prove an upper bound on the probability of a random
walk hitting $A_n$  at time $m>n$, without hitting the points glued
to the right or to the left of $A_n$. To get this bound we will need
some geometric properties of $A_n$ (Unlike the lower bound in Lemma
\ref{l:jump_over} which holds for general sets).

The following definition captures the geometric property we will
need, which roughly means that the set does not have big gaps
between its points, where the gaps are measured on the scale of
their position on $\Z$.

\begin{defn}
   A set of positive integers $B=\{b_1<\dots<b_k\}$ is said to be
   $\eps$-dense if $b_{i+1}<b_i^{1+\eps}$ for each $i$. The set is said
   to be dense in an interval $[n,m]$ if $\{n,m\}\cup(B\cap[n,m])$ is
   $\eps$-dense. A set $B\subset\Z^-$ is said to be dense if $-B$ is
   dense.
\end{defn}
We remark that by this definition the empty set, and any singleton,
are also considered $\eps$-dense, as they do not contain any gaps.

\begin{lemma}\label{L:no_penetrate}
  Assume $\P(\xi>t) < ct^{-\alpha}$ for some $2<\alpha<3$, and fix
  $0<\eps<\alpha/2-1$. Let $B_+\subset \Z_+$ and $B_-\subset \Z_-$ be finite
  sets. Let $B=B_-\cup B_+$ and set $m=\min(\max B_+,\min B_-)$,and $M=\max(\max B_+,-\min B_-)$.
  Suppose that $m>2n$, $B_+$ is $\eps$-dense in $[n,\max B_+]$ and that
  $B_-$ is $\eps$-dense in $[\min B_-,-n]$.
  Then for some $C>0$ depending only on $\eps$ and the walk (and not on $B$ or $n$), for
  any $y$ with $|y|>M$
  \[
  \P_y(T_{[-n,n]} < T_B)  <  C n^{1+\eps} m^{1-\alpha}.
  \]
\end{lemma}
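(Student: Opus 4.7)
The plan is to mirror the decomposition used in the proof of \lemref{l:jump_over}, but with the inequalities reversed so as to obtain an upper bound. Setting $A=B\cup[-n,n]$, we decompose the trajectory of the walker from $y$ according to the point $z\notin A$ from which its final jump into $[-n,n]$ is made, giving
\[
\P_y(T_{[-n,n]} < T_B) \;=\; \sum_{z \notin A}\,
  \frac{\P_y(T_z < T_A)}{\P_z(T_A < T_z)}\,p(z,[-n,n]).
\]
By \lemref{L:hit_x}(\ref{enu:escp dist}), $1/\P_z(T_A < T_z) \lesssim \dist(z, A)+1$, and the task reduces to estimating
\[
\sum_{z \notin A}\;\P_y(T_z < T_A)\cdot\bigl(\dist(z, A)+1\bigr)\cdot p(z,[-n,n]).
\]

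I would then split the sum into an \emph{outer} part, where $z$ lies beyond the obstacle range (i.e.\ $z>\max B_+$ or $z<\min B_-$), and an \emph{inner} part, where $z$ lies in a gap of $B_+$ inside $[n,\max B_+]$ or, symmetrically, a gap of $B_-$ inside $[\min B_-,-n]$. For the outer part, the trivial bound $\P_y(T_z<T_A)\leq 1$ already suffices: combined with the tail estimate $p(z,[-n,n])\lesssim n|z|^{-1-\alpha}$ and with $\dist(z,A)$ equal to the distance from $z$ to the nearer endpoint of $B$, the resulting sum integrates, exactly as in the analogous step of \lemref{l:jump_over}, to $O(n M^{1-\alpha})\leq O(n m^{1-\alpha})$.

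For the inner part the bound $\P_y(T_z<T_A)\leq 1$ is too wasteful, since $z$ is shielded from $y$ by $B$. I would instead view $\P_y(T_z<T_A)$ as the harmonic measure from $y$ of the singleton $\{z\}$ in $A\cup\{z\}$ and control it by an overshoot argument: applying \lemref{L:overshoot}(\ref{l:overshoot}) with $\max B_+$ playing the role of the barrier, the probability that the first entry of the walker into $(-\infty,\max B_+]$ is exactly at $z$ is $\lesssim(\max B_+-z)^{1-\alpha}$, and after a strong-Markov iteration that accounts for unsuccessful entries into other gaps one should retain a bound of the same order. The $\eps$-density hypothesis then gives $\dist(z,A)\leq b_i^{1+\eps}$ whenever $z\in(b_i,b_{i+1})$ is in the $i$th gap of $B_+$. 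Summing gap by gap, one splits the regimes $z-n\leq n$ and $z-n>n$ (using $p(z,[-n,n])\lesssim(z-n)^{-\alpha}$ respectively $\lesssim nz^{-1-\alpha}$) and checks that the series is geometrically dominated by the first gap $(n,b_1)$; its length being at most $n^{1+\eps}-n$ by $\eps$-density, this first-gap contribution is $O(n^{1+\eps}m^{1-\alpha})$, which absorbs the outer contribution and yields the lemma.

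The main obstacle, as indicated, is the harmonic-measure estimate $\P_y(T_z<T_A)\lesssim(\max B_+-z)^{1-\alpha}$ for $z$ strictly inside the obstacle range. A direct application of \lemref{L:overshoot}(\ref{l:overshoot}) controls only the \emph{first} entry into $(-\infty,\max B_+]$ at $z$; the walker might instead enter a different gap, wander, and later approach $z$, so one must show that the iterated contributions add up to at most a constant multiple of the first-entry bound. I expect an induction on the gap index combined with the strong Markov property and the ladder-process estimate of \lemref{L:overshoot}(\ref{l:Ladder}) to supply the necessary control, but the bookkeeping will be the most technical step in the proof.
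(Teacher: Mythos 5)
Your starting identity
\[
\P_y(T_{[-n,n]} < T_B) \;=\; \sum_{z \notin A}\,
  \frac{\P_y(T_z < T_A)}{\P_z(T_A < T_z)}\,p(z,[-n,n]),\qquad A=B\cup[-n,n],
\]
is correct, and the outer contribution ($z$ beyond $B$) can indeed be made to give $O(nm^{1-\alpha})$ — though note that the pointwise estimate $p(z,[-n,n])\lesssim n|z|^{-1-\alpha}$ does not follow from the tail hypothesis $\P(\xi>t)\lesssim t^{-\alpha}$ alone; you need to run a summation by parts and use the cumulative bound $\sum_{z\ge w}p(z,[-n,n])\lesssim n(w-n)^{-\alpha}$, as in the paper's \eqref{need_good_dist}.

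The genuine gap is the inner estimate $\P_y(T_z<T_A)\lesssim(\max B_+-z)^{1-\alpha}$, which you flag as "the most technical step" but whose resolution is actually the whole content of the lemma. Note that $\P_y(T_z<T_A)$, for $z$ in a gap of $B$, is an object of exactly the same nature as $\P_y(T_{[-n,n]}<T_B)$: it is the probability of penetrating through a dense barrier to reach a prescribed target. Your last-jump decomposition has therefore not reduced the problem, only relocated it. Moreover the single-overshoot bound $(\max B_+-z)^{1-\alpha}$ is almost certainly not the right shape: it depends only on the gross distance to the boundary and ignores the number and spacing of the points of $B$ between $y$ and $z$, which is precisely what governs the cost of penetration. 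The factor $n^{1+\eps}$ in the lemma's conclusion is a quantitative fingerprint of that spacing, and nothing in your inner estimate produces it — it only enters through the harmless $\dist(z,A)$ bound, which is not where the density is actually doing work.

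The paper's proof handles this by a genuine induction. It defines $f(k)=\sup_{x\geq k}\P_x(T_{[-n,n]}<T_B)$ (and symmetrically for $k<0$) and proves $f(k)\leq\gamma n^{1+\eps}k^{1-\alpha}$ by induction on $|k|$ over $k\in B$. The recursion comes from stopping the walk at $\tau=T_{(-\infty,k/2)}$ and splitting according to the hitting point $y=R_\tau$: small overshoot ($k/4<y<k/2$) is controlled by the event $E_k$ which costs $k^{2-\alpha}$ and reduces to $f(k/4)$; a big overshoot past $-k/4$ similarly costs $k^{2-\alpha}$ and reduces to $f(-k/4)$; and the intermediate range $|y|\le k/4$ contributes $k^{1-\alpha}\sum_{|i|\le k/4}f(i)$. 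The $\eps$-density is then used where it matters: for $|i|>n^{1+\eps}$ the density guarantees a point of $B$ in $(|i|^{1/(1+\eps)},|i|)$, so the monotone-in-$|i|$ function $f$ can be bounded by the induction hypothesis at that nearby point of $B$, giving $f(i)\lesssim n^{1+\eps}|i|^{1-\alpha+2\eps}$; the trivial bound $f(i)\le1$ is used only for $|i|\le n^{1+\eps}$, which is where the $n^{1+\eps}$ in the final answer comes from. You would need to reconstruct an argument of essentially this structure to justify your inner estimate, so the "strong-Markov iteration and bookkeeping" you defer is not peripheral — it is the proof.
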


\begin{proof}
  Define
  \[
  f(k) = \begin{cases}
    \sup_{x\geq k} \P_x(T_{[-n,n]} < T_B)  & k\geq0, \\
    \sup_{x\leq k} \P_x(T_{[-n,n]} < T_B)  & k<0.
  \end{cases}
  \]
  We will prove by induction on $|k|$ that for a suitable $\gamma$ and any
  $k\in B$ with $|k|>2n$,
  \begin{equation}\label{e:f_bound}
    f(k)\leq \gamma n^{1+\ep} k^{1-\alpha}.
  \end{equation}
  The Lemma follows by considering $k = \max B$ and $k=\min B$.

  By symmetry we may assume $k>2n$. As the base of our induction we will first prove the statement for
$2n < k \leq n^{1+\ep}$. Fix $x>k$ and let $E_k$ be the event that
the random walker
  does not hit $k$ before hitting the half line $(-\infty,\frac{k}{2})$. Our first task is to bound
  $\P(E_k)$. Let $z$ be the first point at which the walk hits
  $(-\infty,k)$. Partitioning according to $z$, we have for any $x\geq k$
  \[
  \P_x(E_k) \leq \sum_{i=\frac{k}{2}}^{k}\P_x(z=i)\P_i(E_k) +
  \P_x\left(z \leq \frac{k}{2}\right).
  \]
  By \lemref{L:overshoot}(\ref{l:overshoot}), $\P_x(z=i) \leq c(k-i)^{1-\alpha}$, and
  $\P_x(z\leq \frac{k}{2}) \leq ck^{2-\alpha}$. By \lemref{L:hit_x},
  $\P_i(E_k) \leq c\frac{k-i}{k}$ for any $\frac{k}{2}\leq i \leq k$.
  Combining these bounds gives
  \begin{equation}\label{e:Ek}
  \P_x(E_k) \leq \sum_{i=k/2}^{k} \frac{c(k-i)^{2-\alpha}}{k} +
  ck^{2-\alpha} \leq ck^{2-\alpha}.
  \end{equation}

For $k\in B$, $k > 2n$ implies $\{T_{[-n,n]} < T_B\} \subset E_k$
and thus for any $k\in B$ with $ 2n < k \leq n^{1+\ep}$
\[f(k) \leq \sup_{x\geq k}\P_x(E_k) \leq c k^{2-\alpha} \leq c
n^{1+\ep} k^{1-\alpha} .\]

Giving us the basis for our induction (for $\gamma > c$).\\

It remains to show using induction on $k$ that \eqref{e:f_bound} holds for $k>n^{1+\ep}$.\\
Let
 $\tau =
  T_{(-\infty,k/2)}$ denote the hitting time of $(-\infty, \frac{k}{2})$ by
  the random walk.  \\
  Denote the hitting point by $y=R_\tau$ and define the
  events
  \begin{align*}
  Q_1 &= \left\{y    < -\frac{k}{4} \right\},  &
  Q_2 &= \left\{|y|\leq \frac{k}{4} \right\},  &
  Q_3 &= \left\{y     > \frac{k}{4} \right\}.
  \end{align*}
  We have
  \begin{equation}\label{e:Qs}
  \P_x(T_{[-n,n]} < T_B) = \sum_{i=1}^{3} \P_x(Q_i, T_{[-n,n]} < T_B).
  \end{equation}

  We bound each of the three summands in terms of the value of $f$ at
  smaller $x$. By \lemref{L:overshoot} we have $\P_x\left(y <
    -\frac{k}{4}\right) < ck^{2-\alpha}$. By the definition of $f$, this
  implies
  \begin{equation}\label{e:Q1}
    \P_x(Q_1, T_{[-n,n]} < T_B) < ck^{2-\alpha} f\left(-\frac{k}{4}\right).
  \end{equation}
  Similarly, for any $i\in\left[-\frac{k}{4},\frac{k}{4}\right]$ we have
  $\P(y=i)< ck^{1-\alpha}$, and so
  \begin{equation}\label{e:Q2}
    \P_x(Q_2, T_{[-n,n]} < T_B) < ck^{1-\alpha} \sum_{i=-k/4}^{k/4} f(i).
  \end{equation}

  To bound the third summand, note that $Q_3\cap\{T_{[-n,n]}<T_B\} \subset E_k$, and so by \eqref{e:Ek} and the Markov
  property at time $\tau$,

  \begin{align}
  \P_x(Q_3,T_{[-n,n]} < T_B) &\leq \P_x(E_k) \max_{\frac{k}{4} < z <
    \frac{k}{2}} \P_z(T_{[-n,n]} < T_B)             \nonumber \\
  &\leq ck^{2-\alpha} f\left(\frac{k}{4}\right).    \label{e:Q3}
  \end{align}

  Combining \eqref{e:Qs}--\eqref{e:Q3} gives
  \begin{align}\label{eq:fk}
    f(k) & < ck^{2-\alpha} f\left(-\frac{k}{4}\right)
           + ck^{2-\alpha} f\left(\frac{k}{4}\right)
           + ck^{1-\alpha} \sum_{i=-k/4}^{k/4} f(i) \\ \nonumber
         & < ck^{1-\alpha} \left( k f\left(-\frac{k}{4}\right)
           + k f\left(\frac{k}{4}\right) + \sum_{i=-k/4}^{k/4} f(i)
         \right).
  \end{align}
We will now want to bound $f(i)$ for $i < k/4$. We will first assume
$i> n^{1+\ep}$.
 At this point we use the fact that $B_\pm$ are $\eps$-dense (and therefore $B\cap
(i^{\frac{1}{1+\ep}},i) \neq \emptyset$.) and that $f$
  is by definition decreasing on $\Z^+$ (and decreasing on $\Z^-$).
  Together with the induction hypothesis these facts imply
  \[
  f(i) \leq \gamma n^{1+\eps} |i|^{\frac{1-\alpha}{1+\eps}} < \gamma
  n^{1+\eps} |i|^{1-\alpha+2\eps}
  \qquad \text{ as long as $|i|>n^{1+\eps}$.}
  \]
  For $i<n^{1+\ep}$ we will use the trivial bound $f(i)\leq 1$.

  Using these bounds in \eqref{eq:fk} gives
  \begin{align*}
  f(k) &<  ck^{1-\alpha} 2n^{1+\eps}
         + ck^{1-\alpha} \gamma n^{1+\eps} \left(2k (k/4)^{1-\alpha+2\eps} +
           2\sum_{i=n^{1+\eps}}^{k/4} i^{1-\alpha+2\eps} \right) \\
    &< ck^{1-\alpha} 2n^{1+\eps} + ck^{1-\alpha} \gamma n^{1+\eps} \left(
       k^{2-\alpha+2\eps} + n^{(1+\eps)(2-\alpha+2\eps)} \right).
  \end{align*}
  Here $c$ is some constant depending only on the random walk. To get the
  claimed bound on $f(k)$ we need this to be less than $\gamma n^{1+\eps}
  k^{1-\alpha}$. This happens iff
  \[
  2c + \gamma c(k^{2-\alpha+2\eps} + n^{(1+\eps)(2-\alpha+2\eps)} )
  < \gamma.
  \]
  We can easily find $\gamma,n_0$ so that this holds for any $k>n>n_0$.
\end{proof}

%
%
We now have all the pieces to prove \thmref{t:den_0}
\begin{proof}[Proof of \thmref{t:den_0}]
For the lower bound on $A_\infty\cap [-n,n]$ it is enough to note that by Theorem \ref{T:all} the first $n^{\frac{\alpha-1}{2}+o(1)}$ points of the aggregate a.s. lie in the interval $[-n,n]$.  We are therefor left with showing the upper bound.

Fix $\ep>0$, which we may assume to be small enough. It is enough to prove that there a.s. exists a constant $C$ so that
$|A_\infty \cap [-n,n]| \leq Cn^{\frac{\alpha-1}{2}+ \ep}$ for all $n$. We will first show that the
aggregates $A_m$ satisfy the geometric property required for
\lemref{L:no_penetrate} for large enough $m$:

\begin{lemma}\label{l:dense}
For any $\ep>0$ there is some $n_0$ s.t. for any $m>n>n_0$
\begin{enumerate}
\item $m^{\frac{2}{\alpha-1}-\ep} \leq  \max A_m \leq
m^{\frac{2}{\alpha-1}+\ep}$
\item $m^{\frac{2}{\alpha-1}-\ep} \leq  -\min A_m \leq
m^{\frac{2}{\alpha-1}+\ep}$
\item $A_m$ is $\ep$-dense in $[\min A_m,\min A_n]$ and in $[\max A_n,\max A_m]$.
\end{enumerate}
\end{lemma}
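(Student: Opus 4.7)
Parts (1) and (2) are direct restatements of \thmref{T:23_diam}: for any $\ep>0$, take $n_0$ large enough that the $o(1)$ in $\max A_k=k^{\beta+o(1)}$ (and in $-\min A_k=k^{\beta+o(1)}$) is bounded by $\ep$ for all $k>n_0$.

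Part (3) is the substance of the lemma, and the plan is to show that $\ep$-density is already encoded in the diameter bounds themselves, without invoking any fresh probabilistic estimate on the random walk or on particle placement. By the symmetry $R\mapsto -R$, it suffices to treat the right-hand interval $[\max A_n,\max A_m]$.

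Fix $\ep>0$ and choose an auxiliary $\ep_1>0$ small enough that $(\beta+\ep_1)/(\beta-\ep_1)<1+\ep/2$; this is possible because $\beta=2/(\alpha-1)>1$. By \thmref{T:23_diam}, enlarge $n_0$ so that the bounds $k^{\beta-\ep_1}\leq\max A_k\leq k^{\beta+\ep_1}$ hold for all $k>n_0$. Now fix $m>n>n_0$, and let $u<v$ be consecutive points of $A_m\cap[\max A_n,\max A_m]$. Define $t^*$ to be the first time at which $\max A_{t^*}>u$. Since $\max A_{t^*-1}\leq u<\max A_{t^*}$, the new particle is the new maximum: $a_{t^*}=\max A_{t^*}$. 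Because $a_{t^*}\in A_m$, $a_{t^*}>u$, and no element of $A_m$ lies strictly between $u$ and $v$, we have $v\leq a_{t^*}\leq (t^*)^{\beta+\ep_1}$. Conversely, $\max A_{t^*-1}\leq u$ together with the lower diameter bound (valid once $t^*-1>n_0$, which holds for $u$, hence $n$, large) yields $t^*-1\leq u^{1/(\beta-\ep_1)}$. Combining,
\[
v\leq\bigl(u^{1/(\beta-\ep_1)}+1\bigr)^{\beta+\ep_1}\leq u^{(\beta+\ep_1)/(\beta-\ep_1)}(1+o(1))\leq u^{1+\ep}
\]
for $u$ sufficiently large, which is precisely the required $\ep$-density.

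The main subtlety is book-keeping of thresholds: one must ensure $n_0$ is large enough (in terms of $\ep$ and $\ep_1$) that $u\geq\max A_n\geq n^{\beta-\ep_1}$ is beyond both the regime where the diameter bounds apply at time $t^*-1$ and the regime where the closing inequality $u^{(\beta+\ep_1)/(\beta-\ep_1)}\leq u^{1+\ep}$ holds. Both are arranged by further enlarging $n_0$. The statement for $[\min A_m,\min A_n]$ follows by applying the same argument to $-R$, which has the same step distribution by symmetry of $R$. The conceptual takeaway, and the non-obvious step, is recognising that density is automatic from the diameter estimates and requires no harmonic-measure or penetration-probability input beyond \thmref{T:23_diam}.
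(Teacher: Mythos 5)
Your proposal is correct and follows essentially the same route as the paper, which likewise deduces part (3) directly from the diameter bounds of \thmref{T:23_diam} via the observation that a gap of size $k$-to-$k^{1+\ep}$ would force the aggregate's maximum (hence diameter) to jump too much in a single step, contradicting those bounds; the paper states this in one sentence as a reductio and you have written out the same calculation in the direct form, tracking the time $t^*$ at which the maximum first exceeds $u$. Your bookkeeping is sound: $t^* > n > n_0$ follows since $t^*\le n$ would give $\max A_{t^*}\le\max A_n\le u$, so both the upper bound at time $t^*$ and the lower bound at time $t^*-1$ are legitimately in force, and the final estimate $v\le u^{(\beta+\ep_1)/(\beta-\ep_1)}(1+o(1))$ closes after enlarging $n_0$.
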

\begin{proof}
The first two clauses are just a restatement of \ref{T:23_diam}. To
prove the third clause, observe that if there is no point in $A_m$
between $k$ and $k^{1+\ep}$ (or between $-k$ and $-k^{1+\ep}$), then
the diameter of the aggregates must grow by too much in one step,
contradicting the diameter bounds.
\end{proof}
We now return to the proof of theorem \ref{t:den_0}. Fix some
$n>n_0$. For any $m>n$ \[\P(a_{m+1} \in \overline{A_n}) \leq
\sup_{|x|\geq \max_{a\in A_m}|a|}\P_x(T_{\overline{A_n}} < T_{A_m
\setminus \overline{A_n}})\]

 Take $m_0 = 8n^{1+3\ep}$. By the above lemma $\max A_n\leq n^{\frac{2}{\alpha-1} +
\eps}$,$-\min A_n \leq n^{\frac{2}{\alpha-1} + \eps}$ and also $\max
A_m\geq m^{\frac{2}{\alpha-1} - \eps}$,$-\min A_m \geq
m^{\frac{2}{\alpha-1} - \eps}$. In particular for any $m>m_0$, $\max
A_m \geq 2 max A_n$, $-\min A_m \geq -2 \min A_n$, and by the third
clause of Lemma \ref{l:dense}  $A_m$ is $\ep$-dense in $[\min
A_m,\min A_n]$ and in $[\max A_n,\max A_m]$

thus we can apply Lemma \ref{L:no_penetrate} to get

\[\sup_{|x|\geq\max_{a\in A_m}|a|}\P_x(T_{\overline{A_n}} < T_{A_m \setminus \overline{A_n}}) \leq \]  \[ \leq
 C n^{(\frac{2}{\alpha-1}+\ep)(1+\eps)} (m^{\frac{2}{\alpha-1}-\ep})^{1-\alpha}
\leq C n^{\frac{2}{\alpha-1}+ 4\ep}m^{-2+2\ep}\]

Let $W_{m_0,n}$be the number of points added to $[\min A_n,\max
A_n]$after time $m_0$. Then
\[\E(W_{m_0,n}) \leq \sum_{m\geq m_0} C n^{\frac{2}{\alpha-1}+4\ep}m^{-2+2\ep} \leq
Cn^{\frac{2}{\alpha-1}+4\ep} m_0^{2\ep-1} \leq
Cn^{\frac{2}{\alpha-1} -1 + 9 \ep}\]

Since our bounds are uniform in the history for all $m>m_0$, and
they are all indicator variables, the variance of their sum is less
then the expectation, and by Chebyshev's inequality

 $$\P(W_{m_0,n} > 2C n^{\frac{2}{\alpha-1}
-1 +9\ep}) \leq \frac{C}{n^{\frac{2}{\alpha-1} -1 +9\ep}}$$
  and therefore there is some $N_0$ such that this does not happen for any
$n>N_0$. Thus for any $n>N_0$ we add at most $m_0 = 8n^{1+3\ep}$
points until time $m_0$, and $Cn^{\frac{2}{\alpha-1} -1 +9\ep}$
after time $m_0$. Since $\ep$ was arbitrary, we are done.
\end{proof}

\section{The aggregation tree}\label{s:tree}

We start by giving the definition of the aggregation trees promised
in the introduction
\begin{defn}
Given a $1$-DLA process  $\{A_n,\Pi_1,\ldots
    \Pi_n\}_{n\geq 0}$ we define the $n'th$ \textbf{aggregation tree} $\t_n$ as
    the graph whose vertices are the points in $A_n$, and whose
    edge set is  $\{(\Pi_i(0),\Pi_i(-1))\}_{i\leq n}$. The
    limit aggregation tree is defined by $\t_\infty=\bigcup_{n\geq
    0}\t_n$. These trees can be given a directed structure by
    directing each edge from $\Pi_i(0)$ to $\Pi_i(1)$.
\end{defn}

Note that by the definition of the aggregation tree, the probability
of adding an edge from a vertex $n\in \t_k$ at time $k+1$ is
$\P_\infty(R(0)=n) = H_{A_k}(n)$ - the harmonic measure of $n$ with
respect to $A_k$.

We will be interested in two types of properties on $\t_\infty$ -
the degrees of its vertices and the number of ends it possesses.

The number of \textbf{ends} in a tree is the maximal cardinality of
the number of vertex-almost-disjoint infinite simple paths in the
tree. ( Almost disjoint meaning every pair of paths sharing only
finitely many vertices).

 This
can also be thought of in terms of a coexistence of different
species in a competition model: Start by choosing $n$ and colouring
the points of $\t_n$ with different colours  and then grow the
aggregation tree using the DLA dynamics, colouring each new point by
the colour of the point to which it was glued. A colour is said to
survive if its component in $\t_\infty$ is infinite. It is not hard
to see that the maximal colours that can coexist is $\geq n$ if and
only if the number of ends in $\t_\infty$ is $\geq n$.

The renewal structure for $\alpha>3$ easily implies the number of
ends.

\begin{lemma}
If $\P(\xi>t) \lesssim t^{-\alpha}$ for $\alpha>3$ then a.s.
$\t_\infty$ has $2$ ends.
\end{lemma}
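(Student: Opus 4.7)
The plan is to exploit the strong--renewal structure of \propref{P:renewal_alpha_geq3}, which a.s.\ produces infinitely many strong right renewal times $r_1<r_2<\dotsb$ and infinitely many strong left renewal times $\ell_1<\ell_2<\dotsb$. I view $\t_\infty$ as rooted at $0$, so that each vertex has a unique parent chain back to $0$ and the ends of $\t_\infty$ are in bijection with rays descending from the root. The key geometric input is a \emph{bottleneck property}: for every $j$ with $r_j>\ell_1$, every vertex $v\in A_\infty$ with $v>a_{r_j}$ is a descendant of $a_{r_j}$. Indeed, such a $v$ is added at some time $t>r_j>\ell_1$; the strong left renewal at $\ell_1$ rules out $v$ being a $-\infty$--particle (otherwise $v\leq a_{\ell_1}<a_{r_j}$), so $v$ comes from $+\infty$, and the strong right renewal at $r_j$ forces $\Pi_t(0)\geq a_{r_j}$. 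Iterating, every vertex on the parent chain of $v$ is itself a $+\infty$--particle added after $r_j$ at position $\geq a_{r_j}$, so the chain must terminate at the unique element of $A_{r_j}$ lying in $[a_{r_j},\infty)$, namely $a_{r_j}$ itself. A symmetric bottleneck holds on the left with the $\ell_j$.

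For the lower bound, the bottleneck property makes $a_{r_{j+1}}$ a descendant of $a_{r_j}$ for every large $j$, so exactly one child of $a_{r_j}$ has every subsequent $a_{r_k}$ in its subtree. Chaining these children together yields an infinite ray descending from $0$ whose tail lies in $[a_{r_1},\infty)$, and the symmetric construction with the $a_{\ell_j}$ gives a ray whose tail lies in $(-\infty,a_{\ell_1}]$. Past a finite initial segment these two rays live in disjoint half-lines of $\Z$, so they share only finitely many vertices and represent two distinct ends.

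For the upper bound, I would show that any two descending rays that eventually live on the same side are end--equivalent. First, past the finite initial segment of a descending ray $0=u_0,u_1,u_2,\dotsc$ on which the corresponding particle times do not yet exceed $\max(r_1,\ell_1)$, the parent--child position constraints force the tail to consist either entirely of $+\infty$--particles or entirely of $-\infty$--particles: a $-\infty$--particle added after $\ell_1$ and its parent both lie in $(-\infty,a_{\ell_1}]$, so it cannot be the child of a vertex at position $\geq a_r$, and vice versa. A right--bound ray thus has its tail confined to $[a_r,\infty)$ with distinct integer positions, so for every $j$ there exists $i$ with $u_i>a_{r_j}$; by the bottleneck property $a_{r_j}$ is an ancestor of $u_i$, i.e.\ $a_{r_j}=u_k$ for some $k<i$, placing $a_{r_j}$ on the ray. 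Hence any two right--bound rays share the infinite set $\{a_{r_j}\}_{j\geq J_0}$ and belong to the same end--class, and symmetrically on the left; so by pigeonhole no three rays can be vertex-almost-disjoint. Combined with the lower bound this gives exactly two ends. The main technical nuisance is the finite transient of particles added before $\max(r_1,\ell_1)$, during which the bottleneck property can fail; this is harmless because both the lower and upper bounds depend only on the asymptotic structure of each ray.
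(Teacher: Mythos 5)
Your proof is correct and takes essentially the same approach as the paper's: both use the strong renewal times from \propref{P:renewal_alpha_geq3} and the observation that the renewal particles $a_{r_j}$ are ``bottlenecks'' which every ray unbounded to the right must pass through, giving at most one end per side, and the two-sided renewal structure supplies the two disjoint rays. The paper states this in three sentences; your write-up simply supplies the details (the parent-chain iteration establishing the bottleneck property, the monotonicity of addition times along a descending ray, and the dichotomy that the tail of a ray eventually consists entirely of $+\infty$-particles or entirely of $-\infty$-particles) that the paper leaves implicit.
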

\begin{proof}

Let $\tau_i$ be the right strong renewal times. Any path $P$ in
$\t\infty$ which is not bounded above ($|P\cap Z_+|=\infty$ has to
include all $a_{\tau_i}$ for all large enough $i$. Therefore there
cannot be $2$ almost disjoint such paths. The renewal times also
ensure that there are two disjoint paths one going to $+\infty$ and
one to $-\infty$, so $\t_\infty$ has $2$ ends.
\end{proof}

\begin{rem*}
To understand the difficulties in finding the number of ends without
the renewal structure, we take another look at the competition
model. At each stage, the point to which the next particle is glued
is distributed according to the harmonic measure on the aggregate.
Thus the colour of the next particle is distributed proportional to
the harmonic measure of each colour with respect to the aggregate.
When a Red particle is added, the harmonic measure the red part of
the aggregate increases, while the harmonic measure of all other
colours decreases. Thus the colours can be thought of as competing
for harmonic measure. For Red to die out, its harmonic measure must
decrease so that $\sum_{n\ge 0}H_{A_n}(Red)$ is finite, otherwise
red points will occur infinitely often almost surely. The problem is
that a small number of points added to the aggregate can make a big
difference in the harmonic measure on the aggregate. Consider a
random walk $R$ with finite variance. Take the competition process
between Red and Blue, and assume that at time $n$ a new red point is
added to the right of $A_{n-1}$. The harmonic measure of that point
is at least a constant. Therefore even if Red started to die out,
one point is enough to increase its harmonic measure to at least a
constant.
\end{rem*}

In light of the above we ask
\begin{question}
How many ends does $\t_\infty$ have for walks with $2<\alpha<3$
\end{question}

Two other natural questions which hold in many models of random
trees are the following:
\begin{question}
Is the number of ends in $\t_\infty$ a.s. constant (or infinite) for
any random walk $R$?
\end{question}
\begin{question}
Is the number of ends in $\t_\infty$ always in $\{1,2,\infty\}$ ?
\end{question}
The difficulty comes from the high dependency structure between
particles added at different steps to the aggregate.

We now go on to study some properties of the degrees of the vertices
in $\t_\infty$.

The first lemma shows that for walks with finite variance the
degrees are not uniformly bounded.
\begin{lemma}
If $\E(|\xi|^2) < \infty$ then $\sup_{v\in\t_\infty} deg(v) =
\infty$
\end{lemma}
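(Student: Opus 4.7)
The plan is to show that for every $k\geq 1$, almost surely some vertex of $\t_\infty$ has degree at least $k$, and then let $k\to\infty$. The strategy is to exhibit a bounded-length event forcing a single vertex to accumulate $k$ incoming edges in $\t$, and then use the renewal structure from Proposition~\ref{P:renewal_alpha_geq3} to argue that such events occur almost surely.

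To build the event, I would take $v = \max A_n$ and show that with positive conditional probability $v$ receives $k$ additional incoming edges within $O(k)$ further steps. By Lemma~\ref{L:hit_x}(\ref{enu:limy>c}) applied with $x=v$ and $A=A_n\setminus\{v\}$, the harmonic measure of $v$ from $+\infty$ is bounded below by a constant, so with probability $\geq c$ the next particle comes from $+\infty$ and targets $v$; combined with Lemma~\ref{L:overshoot}(\ref{l:hit_first_point}), with further probability $\geq c$ the overshoot is exactly $1$, so $a_{n+1}=v+1$ is glued and $v$ receives its first ``extra'' incoming edge in $\t$. For a second incoming edge, I would force the subsequent particle from $+\infty$ to make a single jump of size $\geq 2$ overshooting the new rightmost $v+1$ and landing exactly at $v$; under the paper's standing tail assumption, $\P(\xi\geq 2)>0$ and the walker being at $v+2$ just before its final step has positive conditional probability. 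Iterating this $k$ times and carefully controlling the successive glued positions via Lemma~\ref{L:overshoot}, one gets a product of $O(k)$ conditional probabilities bounded below by some $\delta_k>0$, so $\P\bigl(\deg_{\t_\infty}(v)\geq k+1 \mid \F_n\bigr) \geq \delta_k$ uniformly in $n$.

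To upgrade this to almost sure, I would invoke Proposition~\ref{P:renewal_alpha_geq3}: between successive strong right renewal times the growth to the right of the current rightmost is independent of the past and identically distributed, so the event ``the rightmost at the start of this interval accumulates $\geq k$ incoming edges during it'' provides infinitely many i.i.d.\ trials of probability $\geq \delta_k$, and a Borel--Cantelli argument yields $\sup_v \deg(v)\geq k$ almost surely. The main obstacle is the inductive verification that $\delta_k>0$: each successive hit on $v$ requires the walker to skip over a progressively wider region formed by the preceding overshoots, so the required jump size grows with the number of previous hits; the trick is to control each overshoot using Lemma~\ref{L:overshoot}(\ref{l:overshoot}) and the finite variance assumption so that the region's width grows at most linearly in $k$, keeping each conditional factor uniformly bounded below and hence the product positive. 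A secondary obstacle, for the regime $2<\alpha<3$ where the renewal structure fails (Claim~\ref{c:finite_renewal_23}), would be to replace the i.i.d.\ Borel--Cantelli step with a direct argument using the diameter bounds of Theorem~\ref{T:23_diam} and the penetration bounds of Lemma~\ref{L:no_penetrate} to produce infinitely many local ``restart'' windows in which the same construction can be attempted independently.
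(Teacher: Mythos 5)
Your construction of a positive-probability event at a rightmost vertex is in the same spirit as the paper's one-liner, but your way of upgrading it to an almost-sure statement contains a genuine gap. You invoke Proposition~\ref{P:renewal_alpha_geq3} to get i.i.d.\ trials, but that renewal structure is only established for $\alpha>3$, and Claim~\ref{c:finite_renewal_23} shows it actually \emph{fails} for $2<\alpha<3$. The lemma, however, is stated for every walk with $\E(|\xi|^2)<\infty$, which strictly contains both regimes (and also boundary and irregular cases with no polynomial tail). Your proposed patch for $2<\alpha<3$ — using Theorem~\ref{T:23_diam} and Lemma~\ref{L:no_penetrate} to build ``local restart windows'' — is not worked out and would not obviously extend to, say, $\alpha=2$ with finite variance, or to walks not satisfying $\P(\xi>t)\approx t^{-\alpha}$ at all. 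The paper's proof sidesteps all of this: whenever a particle $a_k$ glues to the right of the current aggregate (which happens infinitely often, simply because $\max A_n\to\infty$), there is a conditional probability $\geq c_n>0$, uniform in $\F_k$, that the next $n$ particles all come from $+\infty$ and attach in the tree to $a_k$. Since this lower bound is uniform in the history and the trigger event recurs infinitely often, a conditional second Borel--Cantelli (Levy's extension, which the paper uses explicitly in Proposition~\ref{p:infinite}) gives the result with no renewal structure and hence no case split at all.

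A secondary, smaller issue: you write that by Lemma~\ref{L:overshoot}(\ref{l:hit_first_point}) ``with further probability $\geq c$ the overshoot is exactly $1$, so $a_{n+1}=v+1$ is glued.'' The overshoot in that lemma controls where the (unstopped) walk \emph{lands}, i.e.\ that the first entry into $(-\infty,\max A]$ is at $\max A$ itself, which is exactly what guarantees the tree edge targets $v$. It does \emph{not} pin down the position from which that jump is made, so it does not by itself force the glued site to be $v+1$; the glued site could be $v+j$ for any $j$ with $p_{v+j,v}>0$. This doesn't sink the argument (you only need \emph{some} glued site with an edge to $v$), but the specific claim $a_{n+1}=v+1$ is unsupported, and relying on it in the subsequent iteration (forcing jumps of size exactly $2,3,\dotsc$) would quietly require $\P(\xi=-j)>0$ for every $j$, which is not part of the lemma's hypotheses. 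The safer phrasing, matching the paper, is simply that each of the next $n$ walkers from $+\infty$ has a uniformly positive conditional probability of attaching to $a_k$ without jumping to the left of $a_k$, without asserting which site they occupy.
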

\begin{proof}
Every time a particle $a_k$ is added to the right of the aggregate,
there is a positive probability depending only on $n$ (and not on
$A_k$) that the next $n$ particles will all come from $+\infty$ and
glue to $a_k$ without ever jumping over $a_k$. Since particles glue
on the right i.o., we will almost surely have vertices with degree
$>n$ for any $n$.
\end{proof}

We now show that under general conditions, having a vertex with
infinite degree implies $A_\infty = \Z$:
\begin{prop}\label{p:infinite}
Assume $\E(|\xi|)<\infty$ and for every $k\in \Z$ there exist
constants $c_k,C_k>0$ such that $c_k < \frac{p_{x,k}}{p_{x,0}} <
C_k$ for all $x\neq 0,k$.

Then for any $n_0\in \t_\infty$ $\P(\{deg_{\t_\infty}(n_0) = \infty
\} \cap \{n_1\notin A_\infty \ \text{or}\  \deg_{\t_\infty}(n_1) <
\infty\})= 0$. And in particular if there a.s. exists some $n_0\in
\t_\infty$ with infinite degree then a.s. all vertices in
$\t_\infty$ have infinite degree and $A_\infty = \Z$ a.s. .

\end{prop}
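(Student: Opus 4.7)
My plan is to convert ``infinite degree'' at $n_0$ into a divergent sum of harmonic measures via conditional Borel--Cantelli, and then transfer this divergence to $n_1$ using the hypothesis on the step distribution ratios. Let $C_i=\{\Pi_i(0)=n_0\}$; then $\P(C_i\mid\F_{i-1})=H_{A_{i-1}}(n_0)$ once $n_0\in A_{i-1}$, so the conditional Borel--Cantelli lemma gives $\{\deg_{\t_\infty}(n_0)=\infty\}=\{\sum_i H_{A_{i-1}}(n_0)=\infty\}$ a.s., and similarly for $n_1$. The key pointwise comparison is this: for any finite $A\subset\Z$ and $a,b\in A$, the identity
\[
H_A(a)=\sum_{z\notin A}\frac{\P_\infty(T_z<T_A)}{\P_z(T_A<T_z)}\, p(a-z)
\]
together with the hypothesis $p(a-z)/p(b-z)\in[c_{a-b},C_{a-b}]$ (which applies since $z\notin A\ni a,b$) yields $H_A(a)\approx H_A(b)$ with constants depending only on $a-b$. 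Hence once $n_1$ enters the aggregate, $\deg_{\t_\infty}(n_0)=\infty$ immediately forces $\deg_{\t_\infty}(n_1)=\infty$ via a second application of conditional Borel--Cantelli. It therefore suffices to prove $n_1\in A_\infty$ a.s.\ on the event $E:=\{\deg_{\t_\infty}(n_0)=\infty\}$.

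For this, let $h_i=\P(a_i=n_1\mid\F_{i-1})$. Since $a_i=n_1$ can happen for at most one $i$, conditional Borel--Cantelli applied to $\{a_i=n_1\}$ forces $\sum_i h_i<\infty$ a.s. I claim that whenever $n_0\in A_k$ and $n_1\notin A_k$ one has $h_{k+1}\geq c\, H_{A_k}(n_0)$ for a constant $c>0$ depending only on $n_0-n_1$; granting this, on $E\cap\{n_1\notin A_\infty\}$ one would have $\sum_i h_i\geq c\sum_k H_{A_k}(n_0)=\infty$ a.s., contradicting $\sum_i h_i<\infty$. To prove the claim, decompose via the strong Markov property at $T_{n_1}$:
\[
H_{A_k}(n_0)=H_{A_k\cup\{n_1\}}(n_0)+H_{A_k\cup\{n_1\}}(n_1)\,\P_{n_1}(R_{T_{A_k}}=n_0).
\]
The harmonic comparison inside $A_k\cup\{n_1\}$ gives $H_{A_k\cup\{n_1\}}(n_0)\leq C\,H_{A_k\cup\{n_1\}}(n_1)$, and since $\P_{n_1}(R_{T_{A_k}}=n_0)\leq 1$ and $H_{A_k\cup\{n_1\}}(n_1)=\P_\infty(T_{n_1}<T_{A_k})$, we get $H_{A_k}(n_0)\leq(C+1)\,\P_\infty(T_{n_1}<T_{A_k})$. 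Combining with the standard formula (derived as in \eqref{eq:Ppmrecurr})
\[
h_{k+1}=\frac{\P_\infty(T_{n_1}<T_{A_k})}{\P_{n_1}(T_{A_k}<T_{n_1})}\cdot p(n_1,A_k),
\]
together with $p(n_1,A_k)\geq p(n_0-n_1)>0$ (since $n_0\in A_k$) and $\P_{n_1}(T_{A_k}<T_{n_1})\leq 1$, yields $h_{k+1}\geq c\, H_{A_k}(n_0)$ as claimed. The ``in particular'' clause follows by letting $n_1$ range over $\Z$.

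The main obstacle I expect is precisely this lower bound $h_{k+1}\geq c\, H_{A_k}(n_0)$: naively, a walker from infinity that visits $n_1$ may continue past it without being glued there, so $h_{k+1}$ could a priori be much smaller than $\P_\infty(T_{n_1}<T_{A_k})$. The saving point is that $n_0\in A_k$, together with the positivity of $p(k)$ for $k\neq 0$ implied by the hypothesis, yields the uniform lower bound $p(n_1,A_k)\geq p(n_0-n_1)$, guaranteeing that each visit to $n_1$ has a bounded-below chance of being terminated by a direct jump into $A_k$.
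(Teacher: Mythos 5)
Your overall strategy coincides with the paper's: both deploy Lévy's conditional Borel--Cantelli to convert ``infinite degree at $n_0$'' into the divergence of $\sum_k H_{A_k}(n_0)$, then transfer this to $n_1$. For the implication $\deg(n_0)=\infty\Rightarrow\deg(n_1)=\infty$ (when $n_1\in A_\infty$) you use exactly the same pointwise comparison $H_A(n_0)\approx H_A(n_1)$ via the formula for $H_A$ and the hypothesis on step ratios.

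Where you genuinely diverge is in proving $\deg(n_0)=\infty\Rightarrow n_1\in A_\infty$. The paper manipulates the last-step decomposition of $H_A(n_0)$ directly, attempting to compare it to $\P_\infty(R(-1)=n_1)=\frac{\P_\infty(T_{n_1}<T_A)}{\P_{n_1}(T_A<T_{n_1})}\,p(n_1,A)$, and in fact the chain of equalities/inequalities as printed has sign/typographical problems (e.g.\ it asserts $p_{n_1,n_0}\geq p(n_1,A)$, which is backwards since $n_0\in A$, and the claimed equality $\sum_x \frac{\P_\infty(T_x<T_A)}{\P_x(T_A<T_x)}p_{x,n_1}p_{n_1,n_0}=H_A(n_0)$ does not hold). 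Your route avoids these issues: you decompose $H_{A_k}(n_0)$ by whether the walk hits $n_1$ before $A_k$, obtaining $H_{A_k}(n_0)\leq(C+1)H_{A_k\cup\{n_1\}}(n_1)=(C+1)\P_\infty(T_{n_1}<T_{A_k})$, and then bound the gluing probability $h_{k+1}$ below by $p_{n_1,n_0}\,\P_\infty(T_{n_1}<T_{A_k})$, giving the needed $h_{k+1}\geq c\,H_{A_k}(n_0)$. This is cleaner: it reuses the already-established harmonic measure comparison (applied to $A_k\cup\{n_1\}$) rather than requiring a separate expected-visits identity, and it squarely addresses the potential gap you identified --- a walker may pass through $n_1$ without gluing --- via the uniform lower bound $p(n_1,A_k)\geq p_{n_1,n_0}>0$ (which the hypothesis on the ratios $p_{x,k}/p_{x,0}$ guarantees). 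Both approaches rest on the same circle of ideas, but yours is a more careful and arguably more correct rendering of the final step.
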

\begin{proof}

We will use Levy's extension to the Borell-Cantelli Lemma (see e.g.
\cite{williams1991probability}, Thm 12.15)
\begin{lemma}
Let $E_n$ be a sequence of events. And let
$\mathcal{G}_n=\sigma\{E_1,\ldots,E_n\}$. Then $$\P( \{\limsup E_i\}
\triangle \{\sum_{n\geq 0} \P(E_i \, | \, \mathcal{G}_{n-1})\}) =
0$$

\end{lemma}

Assume the degree of $n_0$ in $\t_\infty$ is infinite. Let $t_o$ be
the time at which it was added to the aggregate. Let $I_k$ be the indicator of the
event that an edge is connected to $n_0$ at step $k+1$. Then $\P(I_k=1 | \mathcal{G}_{k-1}) = H_{A_k}(n_0)$, and the degree
of $n_0$ in $\t_\infty$ is just $1+\sum_{k\geq t_0} I_k$.

By Levy's extension to the Borell-Cantelli
$\P(\{deg_{\t_\infty}(n_0)=\infty \} \triangle \{\sum_{k\geq t_0}
H_{A_k}(n_0)= \infty\}) = 0$. Fix any $n_1\in \Z$, and let $t_1$ be
a time at which both $n_0$ and $n_1$ are in $A_{t_1}$. Decomposing
paths from $\pm\infty$ to $n_1$ by the position from which they
jumped, we get,by \eqref{eq:Ppmrecurr} that for any set $A$ with
$n_0,n_1\in A$
 \begin{eqnarray*}H_A(n_1) = \P_\infty(R(0) = n_1) = \sum_{x\notin
A}\P_\infty(R(0)=n_1,R(-1)=x) \\ = \sum_{x\notin
A}\frac{\P_\infty(T_x < T_A)}{\P_x(T_A < T_x)}p_{x,n_1} \approx
\sum_{x\notin A}\frac{\P_\infty(T_x < T_A)}{\P_x(T_A <
T_x)}p_{x,n_0} = H_A(n_0)\end{eqnarray*}

Where the constants in the $\approx$ come from the condition on the
walk (and do not depend on $A$). We deduce
$\P(\{deg_{\t_\infty}(n_0)=\infty \} \triangle \{\sum_{k\geq
t_1}H_{A_k}(n_1)= \infty\}) = 0$
  which implies,
using again Levy's extension to the Borell-Cantelli Lemma, that
$\P(\{deg_{\t_\infty}(n_0)=\infty\} \triangle
\{deg_{\t_\infty}(n_1)=\infty\}) = 0$.

To show that $\P(deg_{\t_\infty}(n_0)=\infty\} \triangle \{A_\infty
= \Z\})=0$ , we use a similar argument: Fix any $n_1\in \Z$. by
\eqref{eq:Ppmrecurr}
 \begin{eqnarray*}H_A(n_0) = \P_\infty(R(0) = n_0) = \sum_{x\notin
A}\P_\infty(R(0)=n_0,R(-1)=x) \\ = \sum_{x\notin
A}\frac{\P_\infty(T_x < T_A)}{\P_x(T_A < T_x)}p_{x,n_1}p_{n_1,n_0}
\geq  \sum_{x\notin A}\frac{\P_\infty(T_x < T_A}{\P_x(T_A <
T_x)}p_{x,n_1}p_{n_1,A} = \P_\infty(R(-1)=n_1) \end{eqnarray*}

We therefore conclude $\P(\{deg_{\t_\infty}(n_0)=\infty\} \triangle
\{\sum_{k\geq t_0} \P^{A_k}_\infty(R(-1)=n_1) = \infty\})=0$  and by
another use of Levy's extension $\P(\{deg_{\t_\infty}(n_0)=\infty\}
\triangle \{n_1\in A_\infty\})=0$
\end{proof}

\begin{rem*}
\begin{enumerate}
\item Since only finitely many points in $A_\infty$ can be close to
any point $n_0$, with some extra work the condition on the walk can
be weakened to $\liminf_{|n|\rightarrow
\infty}\frac{p_{n,0}}{p_{n,k}}
> c_k$ and $\limsup_{|n|\rightarrow \infty}\frac{p_{n,0}}{p_{n,k}} <
C_k$ for any $k\in \Z$.
\item A similar theorem holds for transient random walks ($\E(|\xi|) =
\infty$) using the gluing formula for transient walk (\cite{AABK}
section $7$)
\end{enumerate}
\end{rem*}

Using the above we get that for nice walks with finite variance, all
degrees in $\t_\infty$ are a.s. finite
\begin{coro}
If $\P(\xi>t) \lesssim t^{-\alpha}$ for $\alpha>3$, or $\P(\xi>t)
\approx t^{-\alpha}$ for $3> \alpha > 2$ then all degrees in
$\t_\infty$ are a.s. finite.
\end{coro}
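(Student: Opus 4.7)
The plan is to deduce the corollary from Proposition~\ref{p:infinite} in its contrapositive form. By taking the intersection over all $n_1 \in \Z$ of the null events supplied by the proposition, any vertex $n_0$ with $\deg_{\t_\infty}(n_0) = \infty$ forces $A_\infty = \Z$ almost surely on the event that $\deg_{\t_\infty}(n_0) = \infty$ occurs. Consequently it suffices to show two things: (i) $A_\infty \neq \Z$ almost surely under each of the two tail assumptions, and (ii) the walk satisfies the hypotheses needed to apply Proposition~\ref{p:infinite}.

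For (i), the content is already in hand. In the regime $\alpha > 3$, Theorem~\ref{T:density_alpha_geq3} exhibits an almost-sure density $B = \lim n/D_n$, and the remark immediately after that theorem notes that $B < 1$ strictly for any non-simple walk, since holes arise inside renewal intervals; hence $A_\infty$ omits infinitely many integers. The simple random walk itself is a trivial case, as every particle stops adjacent to the current aggregate and so all degrees are bounded by $3$. In the regime $2 < \alpha < 3$, Theorem~\ref{t:den_0} gives $|A_\infty \cap [-n,n]| = o(n)$ directly, so $A_\infty$ has density zero and a fortiori $A_\infty \neq \Z$.

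For (ii), the moment hypothesis $\E|\xi| < \infty$ is automatic since $\alpha > 2$. The pointwise regularity condition $c_k < p_{x,k}/p_{x,0} < C_k$ follows for $2 < \alpha < 3$ from the two-sided bound $\P(\xi = k) \approx |k|^{-1-\alpha}$. The main obstacle I anticipate is the $\alpha > 3$ case, where only an upper tail bound is assumed and the ratio $p_{x,k}/p_{x,0}$ need not be uniformly bounded below for small $|x|$. Here I would appeal to the weakened asymptotic $\liminf$/$\limsup$ form of the regularity condition mentioned in item~(1) of the remark after Proposition~\ref{p:infinite}: the identity \eqref{eq:Ppmrecurr} used inside the proof is controlled by the ratio $p_{x,n_1}/p_{x,n_0}$ for $|x|$ larger than some constant (only finitely many $x$'s near $n_0$, $n_1$ contribute), and for walks whose step distribution has regularly varying tail one has $p_{x,n_1}/p_{x,n_0} \to 1$ as $|x|\to\infty$; this suffices for the harmonic-measure comparison to go through up to multiplicative constants, and the rest of the proof of Proposition~\ref{p:infinite} is unchanged. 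Combining (i) and (ii) yields the required contradiction from any vertex of infinite degree.
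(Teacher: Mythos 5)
For the $2<\alpha<3$ case your argument is the paper's argument: use Theorem~\ref{t:den_0} to get $A_\infty\neq\Z$ a.s.\ and then the contrapositive of Proposition~\ref{p:infinite}. (Your side remark that the ratio hypothesis ``follows from the two-sided bound $\P(\xi=k)\approx|k|^{-1-\alpha}$'' is not what the corollary assumes --- it assumes $\P(\xi>t)\approx t^{-\alpha}$, a tail bound, which does not by itself give a pointwise bound --- but the paper uses Proposition~\ref{p:infinite} here under the same implicit standing regularity, so you are no worse off than the source on this point.)

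Your treatment of $\alpha>3$, however, has a genuine gap. The corollary assumes only an upper tail bound $\P(\xi>t)\lesssim t^{-\alpha}$, which does not force any pointwise regularity on $p_{x,k}=\P(\xi=k-x)$, not even the relaxed $\liminf/\limsup$ form from the remark after Proposition~\ref{p:infinite}. Your patch invokes ``walks whose step distribution has regularly varying tail,'' but regular variation is an extra hypothesis, not a consequence of the upper tail bound; one can construct symmetric aperiodic walks with $\P(\xi>t)\lesssim t^{-\alpha}$ whose pointwise probabilities vanish on long stretches, so that $p_{x,n_1}/p_{x,n_0}$ is neither bounded above nor below as $|x|\to\infty$. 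Hence Proposition~\ref{p:infinite} is simply not available under the stated hypothesis. The paper's proof avoids this entirely and uses a different mechanism for $\alpha>3$: by Proposition~\ref{P:renewal_alpha_geq3}, after the first left and right strong renewal times the aggregate decomposes into finite renewal intervals, and a vertex in a given interval can only receive edges from particles glued within that interval (plus at most one edge to the previous interval), so every degree is bounded by the size of its renewal block, which is finite. This is both more elementary and strictly more general than routing through Proposition~\ref{p:infinite}, because it needs no pointwise control of the step distribution at all. You should replace your $\alpha>3$ argument with the renewal one; the rest of your write-up can stay as is.
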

\begin{proof}
For the $\alpha>3$ this follows from the renewal structure, since
after the first left and right renewal times,  only points within a
renewal interval can connect to points in that interval (with the
exception the renewal points themselves which have one connection to
the previous interval). For $2<\alpha<3$ \thmref{t:den_0} implies
$A_\infty \neq \Z$ a.s. and therefore by \propref{p:infinite} all
degrees in $\t_\infty$ are a.s. finite.
\end{proof}


\bibliographystyle{alpha}
\bibliography{DLA}

\noindent
{\bf Gideon Amir} \\
Department of Mathematics, Bar-Ilan University\\
Ramat Gan 52900, Israel \\
{\sc gidi.amir@gmail.com}\\

\end{document}